\newtheorem{theorem}{Theorem}[section]
\newtheorem{proposition}[theorem]{Proposition}
\newtheorem{lemma}[theorem]{Lemma}
\newtheorem{remark}[theorem]{Remark}
\newcommand{\Ext}{\mathop\mathrm{Ext}\nolimits}
\newcommand{\Hom}{\mathop\mathrm{Hom}\nolimits}
\newcommand{\Pic}{\mathop\mathrm{Pic}\nolimits}
\newcommand{\im}{\mathop\mathrm{im}\nolimits}
\newcommand{\splcpx}{\mathop\mathrm{Splcpx}\nolimits}
\newcommand{\uet}{^{\mbox{\rm \scriptsize{\'{e}t}}}}
\begin{document}

\title{Smoothness of the moduli space of complexes of 
coherent sheaves on an abelian or a projective K3 surface}
\author{Michi-aki Inaba}
\address{{\rm Michi-aki Inaba} \\
Department of Mathematics, Kyoto University \\
Kyoto, 606-8502, Japan}
\email{inaba@math.kyoto-u.ac.jp}
\subjclass{14D20, 18E30}

\begin{abstract}
For an abelian or a projective K3 surface $X$
over an algebraically closed field $k$,
consider the moduli space $\splcpx_{X/k}\uet$
of the objects $E$ in $D^b(\mathrm{Coh}(X))$
satisfying $\Ext^{-1}_X(E,E)=0$ and $\Hom(E,E)\cong k$.
Then we can prove that $\splcpx_{X/k}\uet$
is smooth and has a symplectic structure.
\end{abstract}

\maketitle

\markboth{Michi-aki Inaba}{Smoothness of the moduli space
of complexes of coherent sheaves on an abelian or a projective K3 surface}

\section{Introduction}

It was proved by Mukai in \cite{mukai} that the moduli space
of simple sheaves on an abelian or a projective K3 surface is smooth
and has a symplectic structure.
We will generalize this result to the moduli space
of objects in the derived category of coherent
sheaves, which is introduced in \cite{inaba1}.
By [\cite{inaba2}, Theorem 4.4], the moduli space of
(semi)stable objects with respect to a strict ample sequence
in a derived category of coherent sheaves on an abelian or a projective
K3 surface gives examples of projective symplectic varieties.

In the proof of the main results,
we will use the the trace map that also played a key role
in \cite{mukai}.
More precisely, we will calculate the image
by the trace map of the obstruction class for
the deformation of complexes of coherent sheaves.
So the idea of the proof of this paper is the same
as that of \cite{mukai}.
However, the calculation of the trace map, without any preparation,
seems to be too complicated.
For this reason, we will reconsider in section 2 the definition
of the obstruction class for the deformation of vector bundles.
By virtue of this consideration (Lemma \ref{obstruction=})
in section 2, the calculation of the trace map
becomes clear and the main result can be deduced from it.

The content of this paper was originally written as an appendix
of \cite{inaba2}.
However there was a mistake in the proof of the smoothness of
$\splcpx_{X/k}\uet$.
In this paper the author corrects the mistake.

\section{Obstruction classes for the deformation of vector bundles}

First we recall the obstruction theory of the deformation
of objects in the derived category of bounded complexes
of coherent sheaves.

Let $S$ be a noetherian scheme and $X$ be
a projective scheme flat over $S$.
We fix an $S$-ample line bundle ${\mathcal O}_X(1)$ on $X$.
Let $A$ be an artinian local ring over $S$ with residue field $k=A/m$
and $I$ be an ideal of $A$ such that $mI=0$.
Take a bounded complex $E^{\bullet}$ of $A/I$-flat coherent sheaves on $X_{A/I}$.
Then there are integers $l,l'$ such that $E^i=0$ for $i<l'$ and $i>l$.
We can take a complex $V^{\bullet}=(V^i,d^i)$ of the form 
$V^i=V_i\otimes{\mathcal O}_{X_{A/I}}(-m_i)$
and a quasi-isomorphism $V^{\bullet}\to E^{\bullet}$,
where $V_i$ are free $A$ modules of finite rank,
$V_i=0$ for $i>l$ and
$1\ll m_l\ll m_{l-1}\ll\cdots\ll m_{i+1}\ll m_i\ll\cdots$.
Take lifts 
\[
 \tilde{d}^i:V_i\otimes{\mathcal O}_{X_A}(-m_i)\to 
 V_{i+1}\otimes{\mathcal O}_{X_A}(-m_{i+1})
\]
of the homomorphisms
\[
 d^i:V_i\otimes{\mathcal O}_{X_{A/I}}(-m_i)\to 
 V_{i+1}\otimes{\mathcal O}_{X_{A/I}}(-m_{i+1}).
\]
Then we obtain homomorphisms
\[
 \delta^i:=\tilde{d}^{i+1}\circ\tilde{d}^i:
 V_i\otimes{\mathcal O}_{X_A}(-m_i)\to 
 I\otimes_A V_{i+2}\otimes{\mathcal O}_{X_A}(-m_{i+2}).
\]
We put
\[
 \omega(E^{\bullet}):=[\{\delta^i\}]\in 
 H^2(\Hom(V^{\bullet},V^{\bullet}\otimes I))\cong
 \Ext^2(E^{\bullet}\otimes k,E^{\bullet}\otimes k)\otimes_k I.
\]

\begin{proposition}
 $\omega(E^{\bullet})=0$ if and only if $E^{\bullet}$ can be lifted to
 an object of $D^b(\mathrm{Coh}(X_A))$
 of finite $\mathrm{Tor}$ dimension over $A$.
\end{proposition}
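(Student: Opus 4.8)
The plan is to identify $\omega(E^\bullet)$ with the obstruction to lifting the honest complex $V^\bullet$, and then to show that such a lift of $V^\bullet$ is equivalent to a lift of the object $E^\bullet$. First I would check that $\{\delta^i\}$ is genuinely a $2$-cocycle: expanding the triple composite $\tilde d^{i+2}\tilde d^{i+1}\tilde d^i$ in two ways gives $\tilde d^{i+2}\delta^i=\delta^{i+1}\tilde d^i$, and since each $\delta^i$ is valued in $I$ and $mI=0$, the outer factors $\tilde d^{i+2}$ and $\tilde d^i$ may be replaced by their reductions modulo $m$; this is exactly the assertion that the differential of the Hom-complex $\Hom^\bullet(V^\bullet,V^\bullet\otimes I)$ annihilates $\{\delta^i\}$. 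Next I would verify independence of the chosen lifts: replacing $\tilde d^i$ by $\tilde d^i+\epsilon^i$ with $\epsilon^i$ valued in $I$ changes $\delta^i$ by $\tilde d^{i+1}\epsilon^i+\epsilon^{i+1}\tilde d^i$, the term $\epsilon^{i+1}\epsilon^i$ vanishing because $I^2\subseteq mI=0$; using $mI=0$ once more, this change is precisely the coboundary of $\{\epsilon^i\}$. Hence the class $\omega(E^\bullet)$ is well defined, and it vanishes if and only if the lifts can be chosen so that all the composites $\delta^i$ are zero.

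Granting this, the implication $\omega(E^\bullet)=0\Rightarrow E^\bullet$ lifts is immediate: choosing lifts with every $\delta^i=0$ yields a genuine bounded complex $\tilde V^\bullet:=(V_i\otimes{\mathcal O}_{X_A}(-m_i),\tilde d^i)$ whose terms are $A$-flat, since the $V_i$ are free and ${\mathcal O}_{X_A}(-m_i)$ is flat over $A$. Such a bounded complex of $A$-flat sheaves defines an object of $D^b(\mathrm{Coh}(X_A))$ of finite $\mathrm{Tor}$ dimension over $A$, and because its terms are flat we have $\tilde V^\bullet\otimes^{L}_A(A/I)=\tilde V^\bullet\otimes_A(A/I)=V^\bullet$, which is quasi-isomorphic to $E^\bullet$; thus $\tilde V^\bullet$ is the desired lift.

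The converse is the main obstacle. Given a lift $\tilde E^\bullet$ of finite $\mathrm{Tor}$ dimension over $A$, I would first use the finiteness hypothesis to represent $\tilde E^\bullet$ by a bounded complex of $A$-flat coherent sheaves, and then lift the quasi-isomorphism $V^\bullet\to E^\bullet$ to a quasi-isomorphism $\tilde V^\bullet\to\tilde E^\bullet$ in which $\tilde V^\bullet$ is an honest lift of $V^\bullet$; for this choice of differentials every $\delta^i=0$, and so $\omega(E^\bullet)=0$ by the well-definedness established above. The reason this is possible — and the precise purpose of the hypothesis $1\ll m_l\ll m_{l-1}\ll\cdots$ — is that the very large, rapidly increasing twists force the vanishing of the higher groups $\Ext^{>0}(V_i\otimes{\mathcal O}_{X_A}(-m_i),{\mathcal F})$, equivalently $H^{>0}(X_A,{\mathcal F}(m_i))$ by $S$-ampleness and Serre vanishing, for the coherent sheaves ${\mathcal F}$ arising from $\tilde E^\bullet$ and from $I\otimes\tilde E^\bullet$. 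These vanishings allow one to construct the lifted differentials $\tilde d^i$ inductively in degree, at each stage lifting the differential against the terms already built while keeping the composite zero, the obstruction at every stage lying in a group killed by the twist conditions. The finite-$\mathrm{Tor}$-dimension hypothesis is exactly what supplies the flat representative needed to run this induction, so the two halves of the argument meet and the equivalence follows.
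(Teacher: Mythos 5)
The paper offers no argument for this proposition---it simply cites [\cite{inaba1}, Proposition 2.3]---so your proposal has to stand on its own. Your verification that $\{\delta^i\}$ is a cocycle, your check that the class is independent of the chosen lifts $\tilde d^i$ (using $I^2\subseteq mI=0$), and your forward implication are essentially correct. One slip in the forward direction: $V^\bullet$ is bounded only \emph{above} (the resolution continues indefinitely to the left, as the chain $1\ll m_l\ll m_{l-1}\ll\cdots$ indicates), so the lifted complex $\tilde V^\bullet$ is not bounded. To conclude that it nevertheless defines an object of $D^b(\mathrm{Coh}(X_A))$ of finite $\mathrm{Tor}$ dimension you still need the standard truncation step: $H^i(\tilde V^\bullet\otimes_A k)=H^i(E^\bullet_0)$ vanishes for $i<l'$, hence $\coker(\tilde d^{l'-1})$ is $A$-flat and the canonical truncation is a bounded complex of $A$-flat sheaves quasi-isomorphic to $\tilde V^\bullet$.

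The genuine gap is in the converse, at the sentence ``lift the quasi-isomorphism $V^\bullet\to E^\bullet$ to a quasi-isomorphism $\tilde V^\bullet\to\tilde E^\bullet$ in which $\tilde V^\bullet$ is an honest lift of $V^\bullet$.'' By your own well-definedness discussion, finding differentials on the fixed terms $V_i\otimes{\mathcal O}_{X_A}(-m_i)$ that lift the $d^i$ and compose to zero is \emph{equivalent} to $\omega(E^\bullet)=0$; so this single step carries the entire content of the implication and cannot be waved through. The justification you offer---Serre vanishing for the twists $m_i$---only lets you lift a morphism from $V^i$ to a coherent sheaf along a surjection whose kernel has vanishing $H^1$ after twisting; it does not show that the target you must hit at each inductive stage, namely the sheaf of ``cycles'' $Z^i=\{(x,y)\in\tilde V^i\oplus\tilde E^{i-1}\,:\,\tilde d^i x=0,\ \tilde g^i x=d^{i-1}_{\tilde E}y\}$ of the partially built lifted cone, surjects onto the corresponding cycles over $A/I$. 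That surjectivity is where the $A$-flatness of the terms of $\tilde E^\bullet$ and the acyclicity of the mapping cone of $V^\bullet\to\tilde E^\bullet\otimes_A A/I$ must be used, and it is precisely in the degrees where $E^\bullet$ has cohomology that lifting $V^\bullet$ by itself fails and the comparison map must be lifted simultaneously. You also pass over a preliminary point: $\tilde E^\bullet\otimes_A A/I$ is only isomorphic to $E^\bullet$ in the derived category, so the quasi-isomorphism $V^\bullet\to E^\bullet$ must first be replaced by an honest chain map $V^\bullet\to\tilde E^\bullet\otimes_A A/I$ representing the composite derived morphism (possible, again, because of the twists). With these points supplied your induction closes and the strategy works; as written, the decisive step is asserted rather than proved. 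An alternative that avoids the simultaneous lifting is to build a resolution $\tilde W^\bullet\to\tilde E^\bullet$ of the prescribed shape directly over $A$, reduce it modulo $I$, and invoke independence of $\omega(E^\bullet)$ from the choice of resolution---but that independence is itself a lemma you would then have to prove.
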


(Proof is in [\cite{inaba1},Proposition 2.3].)

For a vector bundle, there is another definition of the obstruction class.
Let $F$ be a locally free sheaf of rank $r$ on $X_{A/I}$.
Take an affine open covering $\{U_{\alpha}\}$ of $X_A$
such that 
$F|_{U_{\alpha}}\cong{\mathcal O}_{U_{\alpha}\otimes {A/I}}^{\oplus r}$
for any $\alpha$.
Let $F_{\alpha}$ be a free ${\mathcal O}_{U_{\alpha}}$-module such that
$F_{\alpha}\otimes A/I \cong F|_{U_{\alpha}}$.
Take a lift 
$\varphi_{\beta \alpha}:F_{\alpha}|_{U_{\alpha\beta}}\to
F_{\beta}|_{U_{\alpha \beta}}$
of the composite 
\[
 F_{\alpha}\otimes A/I|_{U_{\alpha\beta}}\stackrel{\sim}\longrightarrow
 F|_{U_{\alpha\beta}}\stackrel{\sim}\longrightarrow
 F_{\beta}\otimes A/I|_{U_{\alpha \beta}},
\]
where $U_{\alpha\beta}:=U_{\alpha}\cap U_{\beta}$.
We put 
\[
 \theta_{\alpha\beta\gamma}:=
 \varphi_{\gamma\alpha}^{-1}\circ\varphi_{\gamma\beta}\circ
 \varphi_{\beta\alpha}-\mathrm{id}_{F_{\alpha}}:
 F_{\alpha}|_{U_{\alpha\beta\gamma}}\longrightarrow 
 I\otimes F_{\alpha}|_{U_{\alpha\beta\gamma}},
\]
where $U_{\alpha\beta\gamma}:=U_{\alpha}\cap U_{\beta}\cap U_{\gamma}$.
Then the cohomology class
\[
 o(F):=[\{\theta_{\alpha\beta\gamma}\}]\in \check{H}^2({\mathcal End}(F)\otimes I)
 \cong \Ext^2(F,F\otimes I)
\]
can be defined.
As is stated in [\cite{sga}, III, Proposition 7.1],
we have the following proposition.

\begin{proposition}
 $o(F)=0$ if and only if $F$ can be lifted to a locally free sheaf
 on $X_A$.
\end{proposition}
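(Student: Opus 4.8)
The plan is to prove both implications at once by analysing how the cochain $\{\theta_{\alpha\beta\gamma}\}$ responds to a change of the chosen lifts, using throughout that $I\subseteq m$ forces $I^2\subseteq mI=0$; thus $I$ is a $k$-vector space and every computation may be carried out modulo $I^2$, i.e. linearly. The first task is to record that $\{\theta_{\alpha\beta\gamma}\}$ is a \v{C}ech $2$-cocycle, so that the class $o(F)$ is defined, and that $o(F)$ is independent of the auxiliary data. The computation underlying all of this is the following: if each $\varphi_{\beta\alpha}$ is replaced by another lift $\varphi'_{\beta\alpha}=\varphi_{\beta\alpha}+\epsilon_{\beta\alpha}$ with $\epsilon_{\beta\alpha}\colon F_{\alpha}\to I\otimes F_{\beta}$, then, setting $\psi_{\alpha\beta}:=\varphi_{\beta\alpha}^{-1}\circ\epsilon_{\beta\alpha}\in\Gamma(U_{\alpha\beta},{\mathcal End}(F)\otimes I)$, I expect to find $\theta'_{\alpha\beta\gamma}=\theta_{\alpha\beta\gamma}+(\delta\psi)_{\alpha\beta\gamma}$, where $\delta$ is the \v{C}ech differential. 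Since any two choices of lifts differ by such a family $\{\epsilon_{\beta\alpha}\}$, this immediately yields both that the class is well defined and that replacing $\{F_{\alpha}\}$ or refining $\{U_{\alpha}\}$ only alters $\{\theta_{\alpha\beta\gamma}\}$ by a coboundary.

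Granting well-definedness, the implication ``$F$ lifts $\Rightarrow o(F)=0$'' is immediate. If $\widetilde F$ is a locally free sheaf on $X_A$ with $\widetilde F\otimes A/I\cong F$, then after refining the covering so that each $\widetilde F|_{U_{\alpha}}$ is free I may take $F_{\alpha}=\widetilde F|_{U_{\alpha}}$ and let the $\varphi_{\beta\alpha}$ be the genuine transition isomorphisms of $\widetilde F$. These satisfy the cocycle identity $\varphi_{\gamma\alpha}^{-1}\circ\varphi_{\gamma\beta}\circ\varphi_{\beta\alpha}=\mathrm{id}_{F_{\alpha}}$ exactly, so $\theta_{\alpha\beta\gamma}=0$ and hence $o(F)=0$.

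For the converse, suppose $o(F)=0$, so that $\{\theta_{\alpha\beta\gamma}\}$ is a \v{C}ech coboundary, say $\theta_{\alpha\beta\gamma}=-(\delta\psi)_{\alpha\beta\gamma}$ for some $1$-cochain $\{\psi_{\alpha\beta}\}\in C^1({\mathcal End}(F)\otimes I)$. Using the key computation in reverse, I define corrected lifts $\varphi'_{\beta\alpha}:=\varphi_{\beta\alpha}+\varphi_{\beta\alpha}\circ\psi_{\alpha\beta}$, so that $\epsilon_{\beta\alpha}=\varphi_{\beta\alpha}\circ\psi_{\alpha\beta}$ and the associated cochain is exactly $\{\psi_{\alpha\beta}\}$. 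Then $\theta'_{\alpha\beta\gamma}=\theta_{\alpha\beta\gamma}+(\delta\psi)_{\alpha\beta\gamma}=0$, which says precisely that the $\{\varphi'_{\beta\alpha}\}$ satisfy the exact cocycle condition. The free modules $\{F_{\alpha}\}$ therefore glue along the $\varphi'_{\beta\alpha}$ into a locally free sheaf $\widetilde F$ on $X_A$, and since each $\varphi'_{\beta\alpha}$ reduces modulo $I$ to the gluing data of $F$, we obtain $\widetilde F\otimes A/I\cong F$, the desired lift.

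The main obstacle is the key computation itself. Expanding $(\varphi'_{\gamma\alpha})^{-1}\circ\varphi'_{\gamma\beta}\circ\varphi'_{\beta\alpha}$ modulo $I^2$ requires inverting $\varphi'_{\gamma\alpha}=\varphi_{\gamma\alpha}+\epsilon_{\gamma\alpha}$ to first order, as $\varphi_{\gamma\alpha}^{-1}-\varphi_{\gamma\alpha}^{-1}\circ\epsilon_{\gamma\alpha}\circ\varphi_{\gamma\alpha}^{-1}$, and then rewriting the three resulting $I$-valued terms in the single trivialization $F_{\alpha}$. Each such rewriting is governed by the reductions of the $\varphi_{\beta\alpha}$ modulo $I$, and precisely because every correction term already lies in $I$ while $I^2=0$, I may freely replace the transition maps by these reductions; the three terms then collapse to $\psi_{\beta\gamma}-\psi_{\alpha\gamma}+\psi_{\alpha\beta}$, reproducing the \v{C}ech differential. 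Keeping track of the source and target of each homomorphism while discarding every product of two $I$-valued factors is the only delicate point, and the cocycle condition $\delta\{\theta_{\alpha\beta\gamma}\}=0$ follows from the same kind of first-order expansion.
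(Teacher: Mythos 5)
Your argument is correct. Note, however, that the paper does not actually prove this proposition: it simply cites [SGA~1, III, Proposition 7.1], so there is no in-text proof to compare against. What you have written is precisely the standard \v{C}ech-theoretic argument underlying that citation, and it is executed correctly: the first-order inversion $(\varphi_{\gamma\alpha}+\epsilon_{\gamma\alpha})^{-1}=\varphi_{\gamma\alpha}^{-1}-\varphi_{\gamma\alpha}^{-1}\circ\epsilon_{\gamma\alpha}\circ\varphi_{\gamma\alpha}^{-1}$ is valid because $I^2\subseteq mI=0$, and the three correction terms do collapse (after conjugating into the trivialization $F_{\alpha}$) to $\psi_{\beta\gamma}-\psi_{\alpha\gamma}+\psi_{\alpha\beta}$, which gives both well-definedness of $o(F)$ and, read backwards, the gluing construction when the class vanishes. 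Two points are glossed but harmless: (i) the identification of an $A$-linear map $F_{\alpha}\to I\otimes F_{\alpha}$ with a section of ${\mathcal End}(F)\otimes I$ uses that $mI=0$, so that such a map factors through $F_{\alpha}\otimes A/I\cong F|_{U_{\alpha}}$ and composition with any lift depends only on its reduction --- this is exactly the mechanism you invoke, but it deserves one explicit sentence; (ii) independence of the choice of the local free modules $F_{\alpha}$ requires first choosing isomorphisms between any two such lifts covering the identity of $F|_{U_{\alpha}}$ (these exist since both are free of the same rank), after which the change is absorbed into a change of the $\varphi_{\beta\alpha}$ and hence into your key computation. You should also note that the corrected maps $\varphi'_{\beta\alpha}=\varphi_{\beta\alpha}\circ(\mathrm{id}+\psi_{\alpha\beta})$ are isomorphisms because $\psi_{\alpha\beta}$ is nilpotent. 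With these remarks your proof is complete and self-contained, which is arguably an improvement on the bare citation in the paper.
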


A vector bundle $F$ on $X_{A/I}$ can be considered as
the object of $D^b(\mathrm{Coh}(X_{A/I}))$ whose $0$-th component is $F$
and the other components are zero.
We will show that $\omega(F)$ and $o(F)$
are the same element in $\Ext^2(F,F\otimes I)$.

We take a resolution of $F$ by locally free sheaves:
\[
 \cdots\longrightarrow V^2 \stackrel{d^2}\longrightarrow V^1
 \stackrel{d^1}\longrightarrow V^0 \stackrel{\pi}\longrightarrow
 F \longrightarrow 0,
\]
where each $V^i$ is isomorphic to $V_i\otimes{\mathcal O}_{X_{A/I}}(-m_i)$
for a free $A$-module $V_i$ of finite rank and
$1\ll m_0\ll m_1\ll\cdots\ll m_i\ll m_{i+1}\ll\cdots$.
Then we have a quasi-isomorphism
${\mathcal Hom}(F,F)\otimes I \to
{\mathcal Hom}^{\bullet}(V^{\bullet},F)\otimes I$.
Let 
\[
 {\mathcal Hom}(F,F)\otimes I \to
 {\mathcal C}^{\bullet}({\mathcal Hom}(F,F)\otimes I)
\]
be the \v{C}ech resolution of ${\mathcal Hom}(F,F)\otimes I$
with respect to the covering $\{U_{\alpha}\}$
and 
\[
 {\mathcal Hom}^{\bullet}(V^{\bullet},F)\otimes I\to
 {\mathcal C}^{\bullet}({\mathcal Hom}^{\bullet}(V^{\bullet},F)\otimes I)
\]
be that of ${\mathcal Hom}^{\bullet}(V^{\bullet},F)\otimes I$.
Then we obtain a composition of isomorphisms
\[
 f:H^2(\Hom^{\bullet}(V^{\bullet},F))\stackrel{\sim}\longrightarrow
 {\bf H}^2(C^{\bullet}({\mathcal Hom}^{\bullet}(V^{\bullet},F)\otimes I))
 \stackrel{\sim}\longrightarrow
 \check{H}^2({\mathcal End}(F)\otimes I),
\]
where $C^{\bullet}({\mathcal Hom}^{\bullet}(V^{\bullet},F)\otimes I)=
\Gamma(X,{\mathcal C}^{\bullet}({\mathcal Hom}^{\bullet}(V^{\bullet},F)\otimes I))$.

\begin{lemma}\label{obstruction=}
Under the above assumption and notation,
we have $f(\omega(F))=o(F)$.
\end{lemma}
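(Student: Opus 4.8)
The plan is to realize the comparison map $f$ at the level of cochains on the Čech–resolution double complex
\[
 K^{p,q}:=\check{C}^p\bigl(\{U_\alpha\},{\mathcal Hom}(V^q,F\otimes I)\bigr),
\]
with the Čech differential $\check\delta$ in the $p$-direction and the differential induced by the resolution in the $q$-direction, and then to chase a cocycle from one edge to the other. Since $m_q\gg 0$ forces $H^{>0}(X,{\mathcal Hom}(V^q,F))=0$, the rows are exact off $p=0$, giving $\mathbf{H}^\bullet(\mathrm{Tot}\,K)\cong H^\bullet(\Hom^\bullet(V^\bullet,F\otimes I))$; since $F$ is locally free, ${\mathcal Hom}^\bullet(V^\bullet,F\otimes I)$ resolves ${\mathcal End}(F)\otimes I$, so the columns are exact off $q=0$, giving $\mathbf{H}^\bullet(\mathrm{Tot}\,K)\cong\check{H}^\bullet({\mathcal End}(F)\otimes I)$. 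These are exactly the two isomorphisms composing $f$. Under the quasi-isomorphism $\Hom^\bullet(V^\bullet,V^\bullet\otimes I)\to\Hom^\bullet(V^\bullet,F\otimes I)$ induced by post-composition with $\pi\colon V^0\to F$, the cocycle $\{\delta^i\}$ representing $\omega(F)$ is carried to its only surviving component $x:=\pi\circ\tilde d^1\circ\tilde d^2\in\Hom(V^2,F\otimes I)=K^{0,2}$ (the others land in $V^{i}$ with $i\ge 1$ and die under $\pi$). Thus the lemma reduces to transporting $x$ down the staircase to $K^{2,0}$ and recognizing $\{\theta_{\alpha\beta\gamma}\}$.

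To run the staircase I would keep the global lifts $\tilde d^i$ fixed and choose, on each affine $U_\alpha$, a lift $\tilde\pi_\alpha\colon V^0\to F_\alpha$ of $\pi$ (possible since $V^0$ is free). Because $\pi\circ d^1=0$, the map $\tilde\pi_\alpha\tilde d^1\colon V^1\to F_\alpha$ reduces to $0$ modulo $I$ and hence factors through $I\otimes F_\alpha=F\otimes I$; setting $x'_\alpha:=\tilde\pi_\alpha\tilde d^1$ and using that an $I$-valued composite is insensitive to replacing $d^2$ by its lift $\tilde d^2$ (they agree modulo $m\supseteq I$), one finds $x'_\alpha\circ d^2=\tilde\pi_\alpha\tilde d^1\tilde d^2=x|_{U_\alpha}$, i.e. $\partial x'=x$. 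Then $(\check\delta x')_{\alpha\beta}=(\tilde\pi_\beta-\tilde\pi_\alpha)\tilde d^1$, whose natural primitive is $x''_{\alpha\beta}:=\tilde\pi_\beta-\varphi_{\beta\alpha}\tilde\pi_\alpha\colon V^0\to F\otimes I$ (again $I$-valued, as both terms lift $\pi$); the same modulo-$m$ remark, together with the fact that $\varphi_{\beta\alpha}$ restricts to the identity on $I\otimes F_\alpha=F\otimes I$, gives $x''_{\alpha\beta}\circ d^1=(\check\delta x')_{\alpha\beta}$, so $\partial x''=\check\delta x'$.

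Finally $x_2:=\check\delta x''\in K^{2,0}$ is the telescoping sum $x''_{\beta\gamma}-x''_{\alpha\gamma}+x''_{\alpha\beta}$. After passing everything to the intrinsic sheaf via $I\otimes F_\alpha\cong I\otimes F_\beta\cong F\otimes I$, under which every $\varphi$ reduces to the identity, the $\tilde\pi_\gamma$ and $\varphi_{\gamma\beta}\tilde\pi_\beta$ contributions cancel and one is left with $(\varphi_{\gamma\alpha}-\varphi_{\gamma\beta}\varphi_{\beta\alpha})\tilde\pi_\alpha=\mp\varphi_{\gamma\alpha}\theta_{\alpha\beta\gamma}\tilde\pi_\alpha$. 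Since $\theta_{\alpha\beta\gamma}$ is $I$-valued and $\tilde\pi_\alpha$ reduces to $\pi$, this is $\pm\theta_{\alpha\beta\gamma}\circ\pi$, which is precisely the image of $\theta_{\alpha\beta\gamma}\in{\mathcal End}(F)\otimes I$ under the inclusion ${\mathcal End}(F)\otimes I\hookrightarrow{\mathcal Hom}(V^0,F\otimes I)$. Hence $x_2$ represents $o(F)$, and $f(\omega(F))=o(F)$.

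I expect the main obstacle to be bookkeeping rather than anything conceptual. One must fix the signs in the total differential of $\mathrm{Tot}\,K$, and correspondingly in the equations $\partial x'=\pm x$ and $\partial x''=\pm\check\delta x'$, so that the final sign is $+$ rather than $-$; and one must justify with care the two facts used repeatedly, namely that every $I$-valued composite depends on its factors only modulo $m$, and that each transition map $\varphi_{\beta\alpha}$ acts as the identity on the intrinsic sheaf $F\otimes I\cong I\otimes F_\alpha$. Choosing these conventions coherently is exactly what makes the telescoping collapse cleanly onto $\{\theta_{\alpha\beta\gamma}\}$.
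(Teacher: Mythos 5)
Your argument is correct and sits in the same framework as the paper's proof: both realize $f$ via the two edge maps of the \v{C}ech--resolution double complex and compare them by a two-step zig-zag from $C^0({\mathcal Hom}(V^2,F)\otimes I)$ down to degree $(2,0)$. Where you genuinely differ is in the choice of primitives, and yours is the more economical route. The paper splits, on each $U_\alpha$, the three short exact sequences cutting up the resolution, and uses the resulting local differentials $d^i_\alpha$ and sections $\sigma_\alpha,\tau_\alpha$ to transport \emph{both} $\omega(F)$ and $o(F)$ to a common middle representative $\{(\pi\otimes\mathrm{id}_I)\circ(d^1_\beta-d^1_\gamma)\circ(\tau_\alpha-\tau_\beta)\}$ in $C^2({\mathcal Hom}(V^0,F)\otimes I)$, at the cost of a page of cancellations of the type $\tilde{r}_1^{\alpha}\circ\tilde{s}_1^{\alpha}=0$. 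You instead lift the augmentation locally to $\tilde\pi_\alpha:V^0\to F_\alpha$ and take the primitives $x'_\alpha=\tilde\pi_\alpha\tilde d^1$ and $x''_{\alpha\beta}=\tilde\pi_\beta-\varphi_{\beta\alpha}\tilde\pi_\alpha$, which pushes $\omega(F)$ all the way to a \v{C}ech cocycle valued in ${\mathcal End}(F)\otimes I$ that is visibly $(\varphi_{\gamma\alpha}-\varphi_{\gamma\beta}\varphi_{\beta\alpha})\circ\tilde\pi_\alpha=-\theta_{\alpha\beta\gamma}\circ\pi$; this buys a direct match with the defining cocycle of $o(F)$ rather than a meeting in the middle. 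Two small points to tighten. First, in the telescoping step the term $\varphi_{\gamma\beta}\tilde\pi_\beta$ does not literally cancel against $\tilde\pi_\beta$ (their difference is $I$-valued but nonzero); what you actually need is the substitution $\varphi_{\gamma\beta}\tilde\pi_\beta=\varphi_{\gamma\beta}\varphi_{\beta\alpha}\tilde\pi_\alpha+x''_{\alpha\beta}$, valid because $\varphi_{\gamma\beta}$ is the identity on the $I$-valued cochain $x''_{\alpha\beta}$, after which the extra $x''_{\alpha\beta}$ cancels inside $\check\delta x''$ and your stated outcome is exactly right. Second, the sign you flag does close up: with the total differential $D=\check\delta+(-1)^p\partial$ the relations $\partial x'=x$ and $\partial x''=\check\delta x'$ give $x\sim-\check\delta x'\sim-\check\delta x''$ in $\mathbf{H}^2$ of the total complex, and this $-1$ cancels the $-1$ in $-\theta_{\alpha\beta\gamma}\circ\pi$, so $f(\omega(F))=o(F)$ on the nose, consistent with the paper (where the analogous minus sign appears and disappears in the step involving $-\{(\pi\otimes\mathrm{id}_I)\circ(d^1_\alpha-d^1_\beta)\}$).
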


\begin{proof}
First note that the element $\omega(F)$ is defined by
\[
 \omega(F)=[\{(\pi\otimes\mathrm{id}_I)
 \circ(\tilde{d}^1\circ\tilde{d}^2)\}]
 \in H^2(\Hom^{\bullet}(V^{\bullet},F\otimes I)),
\]
where $\tilde{d}^i:V_i\otimes{\mathcal O}_{X_A}(-m_i)\to
V_{i+1}\otimes{\mathcal O}_{X_A}(-m_{i+1})$
is a lift of $d^i$.
Replacing $\{U_{\alpha}\}$ by its refinement, we may assume that
$\ker d^2|_{U_{\alpha}}$,
$\im d^2|_{U_{\alpha}}$,
$\im d^1|_{U_{\alpha}}$,
$V_2\otimes{\mathcal O}_{X_{A/I}}(-m_2)|_{U_{\alpha}}$,
$V_1\otimes{\mathcal O}_{X_{A/I}}(-m_1)|_{U_{\alpha}}$
and $F|_{U_{\alpha}}$ are all free sheaves.
Then the exact sequences
\begin{gather*}
 0 \longrightarrow \ker d^2|_{U_{\alpha}}
 \stackrel{i_2}\longrightarrow
 V_2\otimes{\mathcal O}_{X_{A/I}}(-m_2)|_{U_{\alpha}}
 \stackrel{p_2}\longrightarrow
 \im d^2|_{U_{\alpha}} \longrightarrow 0, \\
 0 \longrightarrow \im d^2|_{U_{\alpha}}
 \stackrel{i_1}\longrightarrow
 V_1\otimes{\mathcal O}_{X_{A/I}}(-m_1)|_{U_{\alpha}}
 \stackrel{p_1}\longrightarrow
 \im d^1|_{U_{\alpha}} \longrightarrow 0, \\
 0 \longrightarrow \im d^1|_{U_{\alpha}}
 \stackrel{i_0}\longrightarrow
 V_0\otimes{\mathcal O}_{X_{A/I}}(-m_0)|_{U_{\alpha}}
 \xrightarrow{\pi|_{U_{\alpha}}}
 F|_{U_{\alpha}} \longrightarrow 0 
\end{gather*}
split and we can take free ${\mathcal O}_{U_{\alpha}}$-modules
$F_{\alpha}$, $I_1^{\alpha}$, $I_2^{\alpha}$ such that
$F_{\alpha}\otimes A/I\cong F|_{U_{\alpha}}$
and $I_i^{\alpha}\otimes A/I\cong\im d^i|_{U_{\alpha}}$ for $i=1,2$.
Taking lifts $\tilde{i}_0^{\alpha}$, $\tilde{i}_1^{\alpha}$, $\pi_{\alpha}$,
$\tilde{p}_1^{\alpha}$, $\tilde{p}_2^{\alpha}$ of
$i_0$, $i_1$, $\pi|_{U_{\alpha}}$, $p_1$, $p_2$,
we obtain splitting exact sequences
\begin{gather*}
 0 \longrightarrow \ker \tilde{p}_2^{\alpha} \longrightarrow
 V_2\otimes{\mathcal O}_{X_{A}}(-m_2)|_{U_{\alpha}}
 \stackrel{\tilde{p}_2^{\alpha}}\longrightarrow
 I_2^{\alpha} \longrightarrow 0, \\
 0 \longrightarrow I_2^{\alpha} \stackrel{\tilde{i}_1^{\alpha}}\longrightarrow
 V_1\otimes{\mathcal O}_{X_{A}}(-m_1)|_{U_{\alpha}}
 \stackrel{\tilde{p}_1^{\alpha}}\longrightarrow I_1^{\alpha} \longrightarrow 0, \\
 0 \longrightarrow I_1^{\alpha} \stackrel{\tilde{i}_0^{\alpha}}\longrightarrow
 V_0\otimes{\mathcal O}_{X_{A}}(-m_0)|_{U_{\alpha}}
 \stackrel{\pi_{\alpha}}\longrightarrow F_{\alpha}\longrightarrow 0.
\end{gather*}
Let
\begin{gather*}
 \tilde{s}_2^{\alpha}:I_2^{\alpha}\longrightarrow
 V_2\otimes{\mathcal O}_{X_{A}}(-m_2)|_{U_{\alpha}}, \\
 \tilde{r}_1^{\alpha}:V_1\otimes{\mathcal O}_{X_{A}}(-m_1)|_{U_{\alpha}}
 \longrightarrow I_2^{\alpha}, \quad
 \tilde{s}_1^{\alpha}:I_1^{\alpha}\longrightarrow
 V_1\otimes{\mathcal O}_{X_{A}}(-m_1)|_{U_{\alpha}}, \\
 \tilde{r}_0^{\alpha}:V_0\otimes{\mathcal O}_{X_{A}}(-m_0)|_{U_{\alpha}}
 \longrightarrow I_1^{\alpha}, \quad
 \nu_{\alpha}:F_{\alpha}\longrightarrow
 V_0\otimes{\mathcal O}_{X_{A}}(-m_0)|_{U_{\alpha}}
\end{gather*}
be splittings.
Put
\begin{gather*}
 d^2_{\alpha}:V_2\otimes{\mathcal O}_{X_A}(-m_2)|_{U_{\alpha}}
 \stackrel{\tilde{p}_2^{\alpha}}\longrightarrow I_2^{\alpha}
 \stackrel{\tilde{i}_1^{\alpha}}\longrightarrow
 V_1\otimes{\mathcal O}_{X_A}(-m_1)|_{U_{\alpha}}, \\
 d^1_{\alpha}:V_1\otimes{\mathcal O}_{X_A}(-m_1)|_{U_{\alpha}}
 \stackrel{\tilde{p}_1^{\alpha}}\longrightarrow I_1^{\alpha}
 \stackrel{\tilde{i}_0^{\alpha}}\longrightarrow
 V_0\otimes{\mathcal O}_{X_A}(-m_0)|_{U_{\alpha}}, \\
 \tau_{\alpha}:V_0\otimes{\mathcal O}_{X_A}(-m_0)|_{U_{\alpha}}
 \stackrel{\tilde{r}_0^{\alpha}}\longrightarrow I_1^{\alpha}
 \stackrel{\tilde{s}_1^{\alpha}}\longrightarrow
 V_1\otimes{\mathcal O}_{X_A}(-m_1)|_{U_{\alpha}}, \\
 \sigma_{\alpha}:V_1\otimes{\mathcal O}_{X_A}(-m_1)|_{U_{\alpha}}
 \stackrel{\tilde{r}_1^{\alpha}}\longrightarrow I_2^{\alpha}
 \stackrel{\tilde{s}_2^{\alpha}}\longrightarrow
 V_2\otimes{\mathcal O}_{X_A}(-m_2)|_{U_{\alpha}}.
\end{gather*}
We consider the following diagram:
\[
 \begin{array}{ccccc}
  \Hom(V^0,F\otimes I) & \longrightarrow & \Hom(V^1,F\otimes I) &
  \longrightarrow & \Hom(V^2,F\otimes I) \\
  \downarrow & & \downarrow & & \downarrow \\
  C^0({\mathcal Hom}(V^0,F\otimes I)) & \longrightarrow &
  C^0({\mathcal Hom}(V^1,F\otimes I)) & \longrightarrow &
  C^0({\mathcal Hom}(V^2,F\otimes I)) \\
  \downarrow & & \downarrow & & \downarrow \\
  C^1({\mathcal Hom}(V^0,F\otimes I)) & \longrightarrow &
  C^1({\mathcal Hom}(V^1,F\otimes I)) & \longrightarrow &
  C^1({\mathcal Hom}(V^2,F\otimes I)) \\
  \downarrow & & \downarrow & & \downarrow \\
  C^2({\mathcal Hom}(V^0,F\otimes I)) & \longrightarrow &
  C^2({\mathcal Hom}(V^1,F\otimes I)) & \longrightarrow &
  \, C^2({\mathcal Hom}(V^2,F\otimes I)),
 \end{array}
\]
where we put $V^i:=V_i\otimes{\mathcal O}_{X_A}(-m_i)$ for $i=0,1,2$.
The image of $\omega(F)$ in
${\bf H}^2(C^{\bullet}({\mathcal Hom}^{\bullet}(V^{\bullet},F)\otimes I))$
can be represented by
\[
 \left\{ (\pi\otimes\mathrm{id}_I)\circ
 \tilde{d}^1\circ\tilde{d}^2|_{U_{\alpha}}\right\}
 \in C^0({\mathcal Hom}(V^2,F)\otimes I),
\]
which defines the same element in
${\bf H}^2(C^{\bullet}({\mathcal Hom}^{\bullet}(V^{\bullet},F)\otimes I))$ as
\[
 \left\{ (\pi\otimes\mathrm{id}_I)\circ\tilde{d}^1\circ
 \tilde{d}^2\circ(\sigma_{\alpha}-\sigma_{\beta})\right\}
 \in C^1({\mathcal Hom}(V^1,F)\otimes I).
\]
On the other hand, the image of the element
\[
 \left\{ (\pi\otimes\mathrm{id}_I)\circ
 \left(d^1_{\alpha}-\tilde{d}^1\circ
 (1-\tilde{d}^2\sigma_{\alpha})\right)\right\}
 \in C^0({\mathcal Hom}(V^1,F)\otimes I)
\]
by the homomorphism
$C^0({\mathcal Hom}(V^1,F)\otimes I)\rightarrow
C^0({\mathcal Hom}(V^2,F)\otimes I)$ is
\begin{align*}
 \left\{ (\pi \otimes \mathrm{id}_I)\circ\left( d^1_{\alpha}-\tilde{d}^1
 \circ(1-\tilde{d}^2\circ\sigma_{\alpha} ) \right)\circ d^2_{\alpha}\right\}
 &= \left\{ (\pi\otimes\mathrm{id}_I)(d^1_{\alpha}\circ d^2_{\alpha}-\tilde{d}^1\circ d^2_{\alpha}
 +\tilde{d}^1\circ\tilde{d}^2\circ\sigma_{\alpha}\circ d^2_{\alpha}) \right\}  \\
 &= \left\{ (\pi \otimes \mathrm{id}_I)\left(-\tilde{d}^1\circ d^2_{\alpha}
 +\tilde{d}^1\circ\tilde{d}^2\circ\sigma_{\alpha}\circ d^2_{\alpha}\right)\right\}  \\
 &=\left\{ (\pi\otimes\mathrm{id}_I)\left( -\tilde{d}^1\circ d^2_{\alpha} +
 \tilde{d}^1\circ \tilde{d}^2\circ \tilde{s}_2^{\alpha}\circ\tilde{r}_1^{\alpha}\circ\tilde{i}_1^{\alpha}\circ\tilde{p}_2^{\alpha}
 \right) \right\}  \\
 &=\left\{ (\pi\otimes\mathrm{id}_I)\left(-\tilde{d}^1\circ d^2_{\alpha}\circ\tilde{s}^{\alpha}_2\circ\tilde{p}_2^{\alpha}
 +\tilde{d}^1\circ \tilde{d}^2\circ\tilde{s}^{\alpha}_2\circ \tilde{p}_2^{\alpha} \right) \right\}  \\
 &=\left\{ (\pi\otimes\mathrm{id}_I)\circ \tilde{d}^1\circ (\tilde{d}^2-d^2_{\alpha})\circ\tilde{s}_2^{\alpha}\circ\tilde{p}_2^{\alpha}
 \right\}  \\
 &=0.
\end{align*}
Since
\begin{align*}
 &\left\{ (\pi\otimes\mathrm{id}_I)\circ\tilde{d}^1
 \circ\tilde{d}^2\circ(\sigma_{\alpha}-\sigma_{\beta}) \right\} 
 + d \left\{ (\pi\otimes\mathrm{id}_I)\circ
 \left(d^1_{\alpha}-\tilde{d}^1\circ
 (1-\tilde{d}^2\circ\sigma_{\alpha})\right) \right\} \\
 &=\left\{(\pi\otimes\mathrm{id}_I)\circ\tilde{d}^1\circ\tilde{d}^2
 \circ(\sigma_{\alpha}-\sigma_{\beta})\right\}
 +\left\{ (\pi\otimes\mathrm{id}_I)\circ
 \left(d^1_{\beta}-\tilde{d}^1\circ(1-\tilde{d}^2\circ\sigma_{\beta})
 \right)|_{U_{\alpha}\cap U_{\beta}}\right\} \\
 &\quad -\left\{(\pi\otimes\mathrm{id}_I)\circ
 \left(d^1_{\alpha}-\tilde{d}^1\circ(1-\tilde{d}^2\circ\sigma_{\alpha})
 \right)|_{U_{\alpha}\cap U_{\beta}}\right\}  \\
 &=-\left\{ (\pi\otimes\mathrm{id}_I)\circ
 (d^1_{\alpha}-d^1_{\beta}) \right\},
\end{align*}
we can see that
$\left\{ (\pi\otimes\mathrm{id}_I)\circ\tilde{d}^1\circ\tilde{d}^2(\sigma_{\alpha}-\sigma_{\beta}) \right\}$
and
$-\left\{ (\pi\otimes \mathrm{id}_I)\circ(d^1_{\alpha}-d^1_{\beta}) \right\}$
define the same element in
$\mathbf{H}^2(C^{\bullet}({\mathcal Hom}^{\bullet}(V^{\bullet},F)\otimes I))$.
We can see that the element
$-\{(\pi\otimes\mathrm{id}_I)\circ(d^1_{\alpha}-d^1_{\beta})\}$
defines the same element as
\begin{align*}
 & -\left\{ (\pi\otimes\mathrm{id}_I)\circ
 \left( (d^1_{\beta}-d^1_{\gamma})\circ\tau_{\beta}
 -(d^1_{\alpha}-d^1_{\gamma})\circ\tau_{\alpha}
 +(d^1_{\alpha}-d^1_{\beta})\circ\tau_{\alpha} \right) \right\} \\
 &=\left\{ (\pi\otimes\mathrm{id}_I)\circ (d^1_{\beta}-d^1_{\gamma})
 \circ (\tau_{\alpha}-\tau_{\beta}) \right\}
 \in C^2({\mathcal Hom}(V^0,F)\otimes I)
\end{align*}
in ${\bf H}^2(C^{\bullet}({\mathcal Hom}^{\bullet}(V^{\bullet},F)\otimes I))$.
Thus $\omega(F)$ is equal to the element given by
\[
 \left\{ (\pi\otimes\mathrm{id}_I)\circ(d^1_{\beta}-d^1_{\gamma})
 \circ (\tau_{\alpha}-\tau_{\beta}) \right\}
 \in C^2({\mathcal Hom}(V^0,F)\otimes I)
\]
in ${\bf H}^2(C^{\bullet}({\mathcal Hom}^{\bullet}(V^{\bullet},F)\otimes I))$.
On the other hand, the element $o(F)$ is given by
\[
 \{ (\pi_{\gamma}\circ\nu_{\alpha})^{-1}
 \circ\pi_{\gamma}\circ\nu_{\beta}
 \circ\pi_{\beta}\circ\nu_{\alpha}-\mathrm{id}_{F_{\alpha}} \} 
\]
in $\check{H}^2({\mathcal End}(F)\otimes I)$,
whose image in
${\bf H}^2(C^{\bullet}({\mathcal Hom}^{\bullet}(V^{\bullet},F)\otimes I))$
is represented by
\begin{align*}
 &\{ (\pi_{\gamma}\circ\nu_{\alpha})^{-1}
 \circ\pi_{\gamma}\circ\nu_{\beta}\circ\pi_{\beta}
 \circ\nu_{\alpha}\circ\pi_{\alpha}-\pi_{\alpha} \} \\
 & = \{ (\pi_{\gamma}\circ\nu_{\alpha})^{-1}\circ
 (\pi_{\gamma}\circ\nu_{\beta}\circ\pi_{\beta}
 \circ\nu_{\alpha}\circ\pi_{\alpha}
 -\pi_{\gamma}\circ\nu_{\alpha}\circ\pi_{\alpha}) \} \\
 &= \{ (\pi_{\gamma}\circ\nu_{\alpha})^{-1}\circ\pi_{\gamma}
 \circ(\nu_{\beta}\circ\pi_{\beta}-1)\circ
 \nu_{\alpha}\circ\pi_{\alpha} \} \\
 &= \{(\pi_{\gamma}\circ\nu_{\alpha})^{-1}\circ\pi_{\gamma}
 \circ(-d^1_{\beta}\circ\tau_{\beta})\circ(1-d^1_{\alpha}
 \circ\tau_{\alpha}) \} \\
 &= \left\{(\pi_{\gamma}\circ\nu_{\alpha})^{-1}\circ
 \left( \pi_{\gamma}\circ d^1_{\beta}\circ
 (\tau_{\alpha}-\tau_{\beta})
 -\pi_{\gamma}\circ d^1_{\beta}\circ(\tau_{\alpha}-\tau_{\beta})
 \circ d^1_{\alpha}\circ\tau_{\alpha}\right) \right\}. \\
 \end{align*}
 Here we have
 \begin{align*}
  &\pi_{\gamma}\circ d^1_{\beta}\circ(\tau_{\alpha}-\tau_{\beta})\circ d^1_{\alpha}\circ\tau_{\alpha}  \\
  &=\pi_{\gamma}\circ d^1_{\beta}\circ(\tilde{s}_1^{\alpha}\circ\tilde{r}^{\alpha}_0-\tilde{s}_1^{\beta}\circ\tilde{r}_0^{\beta})
  \circ d^1_{\alpha}\circ\tau_{\alpha}   \\
  &=\pi_{\gamma}\circ d^1_{\beta}\circ\tilde{s}^{\alpha}_1\circ\tilde{r}_0^{\alpha}\circ d^1_{\alpha}\circ\tau_{\alpha}
  -\pi_{\gamma}\circ d^1_{\beta}\circ \tilde{s}^{\beta}_1\circ\tilde{r}_0^{\beta}\circ(d^1_{\alpha}-d^1_{\beta})\circ\tau_{\alpha}
  -\pi_{\gamma}\circ d^1_{\beta}\circ\tilde{s}_1^{\beta}\circ\tilde{r}_0^{\beta}\circ d^1_{\beta}\circ\tau_{\alpha}  \\
  &=\pi_{\gamma}\circ d^1_{\beta}\circ\tilde{s}_1^{\alpha}\circ\tilde{r}_0^{\alpha}\circ\tilde{i}_0^{\alpha}\circ\tilde{p}_1^{\alpha}\circ\tau_{\alpha}
  -\pi_{\gamma}\circ d^1_{\beta}\circ\tilde{s}_1^{\beta}\circ\tilde{r}_0^{\beta}\circ\tilde{i}_0^{\beta}\circ\tilde{p}_1^{\beta}\circ\tau_{\alpha} \\
  &=\pi_{\gamma}\circ d^1_{\beta}\circ\tilde{s}_1^{\alpha}\circ\tilde{p}_1^{\alpha}\circ\tau_{\alpha}
  -\pi_{\gamma}\circ d^1_{\beta}\circ\tilde{s}_1^{\beta}\circ\tilde{p}_1^{\beta}\circ\tau_{\alpha}   \\
  &=\pi_{\gamma}\circ d^1_{\beta}\circ(\mathrm{id}-\tilde{i}_1^{\alpha}\circ\tilde{r}_1^{\alpha})\circ\tau_{\alpha}
  -\pi_{\gamma}\circ d^1_{\beta}\circ(\mathrm{id}-\tilde{i}_1^{\beta}\circ\tilde{r}_1^{\beta})\circ\tau_{\alpha}  \\
  &=\pi_{\gamma}\circ d^1_{\beta}\circ\tilde{i}_1^{\alpha}\circ\tilde{r}_1^{\alpha}\circ\tau_{\alpha}
  -\pi_{\gamma}\circ d^1_{\beta}\circ\tilde{i}_1^{\beta}\circ\tilde{r}_1^{\beta}\circ\tau_{\alpha}  \\
  &=\pi_{\gamma}\circ d^1_{\beta}\circ\tilde{i}_1^{\alpha}\circ\tilde{r}_1^{\alpha}\circ\tilde{s}_1^{\alpha}\circ\tilde{r}_0^{\alpha}
  \quad \text{(note that $d^1_{\beta}\circ\tilde{i}_1^{\beta}=0$)} \\
  &=0. \quad (\text{note that $\tilde{r}_1^{\alpha}\circ\tilde{s}_1^{\alpha}=0$})
 \end{align*}
So the image of $o(F)$ in $\mathbf{H}^2(C^{\bullet}({\mathcal Hom}^{\bullet}(V^{\bullet},F)\otimes I))$ is
\begin{align*}
 \left\{(\pi_{\gamma}\circ\nu_{\alpha})^{-1}\circ\pi_{\gamma}
 \circ d^1_{\beta}\circ(\tau_{\alpha}-\tau_{\beta}) \right\}
 &=\left\{(\pi_{\gamma}\circ\nu_{\alpha})^{-1}\circ
 \pi_{\gamma}\circ (d^1_{\beta}-d^1_{\gamma})\circ
 (\tau_{\alpha}-\tau_{\beta}) \right\} \\
 &=\left\{(\pi\otimes\mathrm{id}_I)\circ (d^1_{\beta}-d^1_{\gamma})
 \circ(\tau_{\alpha}-\tau_{\beta}) \right\} 
\end{align*}
Thus we have the equality
$f(\omega(F))=o(F)$.
\end{proof}

\begin{remark}\rm
Several authors introduced obstruction classes for the deformation
of vector bundles and coherent sheaves.
For example, [\cite{H-L}, Chap 2, Appendix]
is a good reference.
However, it is not so clear that these definitions
are all equivalent.
\end{remark}

\section{Smoothness and symplectic structure}

Let $X$ be a projective scheme over a noetherian scheme $S$,
which is flat over $S$.
We define a functor $\mathrm{Splcpx}_{X/S}$ of the category 
of locally noetherian schemes to that of sets by putting
\[
 \mathrm{Splcpx}_{X/S}(T):= \left\{ E^{\bullet} \left|
 \begin{array}{l}
 \mbox{$E^{\bullet}$ is a bounded complex of $T$-flat coherent} \\
 \mbox{${\mathcal O}_{X_T}$-modules such that for any $t\in T$,} \\
 \mbox{$E^{\bullet}(t)$ satisfies the following condition $(*)$}
 \end{array}
 \right\} \right/\sim,
\]
where $T$ is a locally noetherian scheme over $S$ and
$E^{\bullet}\sim F^{\bullet}$ if there is a line bundle $L$ on $T$ such that
$E^{\bullet}\cong F^{\bullet}\otimes L$ in $D(X_T)$.
Here $D(X_T)$ is the derived category of
${\mathcal O}_{X_T}$-modules and the condition $(*)$ is
\[
 (*)\quad \Ext^i(E^{\bullet}(t),E^{\bullet}(t))\cong
 \begin{cases}
  0 & \text{if $i=-1$} \\
  k(t) & \text{if $i=0$}.
 \end{cases}
\]
Note that we denote $E^{\bullet}\otimes^{\mathbf{L}}k(t)$ by $E^{\bullet}(t)$.
Let $\mathrm{Splcpx}\uet_{X/S}$ be the \'{e}tale sheafification of
$\mathrm{Splcpx}_{X/S}$.

\begin{theorem}
 $\mathrm{Splcpx}\uet_{X/S}$ is represented by an algebraic space
 over $S$.
\end{theorem}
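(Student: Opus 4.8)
The plan is to present $\mathrm{Splcpx}\uet_{X/S}$ as the quotient of an explicit parameter scheme by a smooth groupoid, and then to check that this groupoid is an equivalence relation with trivial stabilizers after rigidification, so that the quotient is a sheaf with representable diagonal admitting a smooth atlas, hence an algebraic space. First I would use boundedness to reduce to fixed discrete data: the complexes $E^{\bullet}$ occurring in a given family have bounded cohomological amplitude and bounded numerical invariants (the Hilbert polynomials of their cohomology sheaves), so after stratifying by these invariants I may fix integers $r_i=\rk V_i$ and twists $1\ll m_l\ll\cdots\ll m_i\ll\cdots$ exactly as in Section 2. For such fixed data a resolution $V^{\bullet}=(V_i\otimes{\mathcal O}_{X_T}(-m_i),d^i)$ exists \'etale-locally on the base $T$, because the positivity $m_i-m_{i+1}\gg 0$ forces ${\mathcal Hom}(V^i,V^{i+1})\cong V_i^{\vee}\otimes V_{i+1}\otimes{\mathcal O}(m_i-m_{i+1})$ to have vanishing higher cohomology along the fibres.

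Next I would build the atlas $R$. The differentials $d^i$ live in the free ${\mathcal O}_S$-modules $\Hom(V^i,V^{i+1})=H^0(X,{\mathcal Hom}(V^i,V^{i+1}))$, which are represented by affine $S$-spaces; the condition $d^{i+1}\circ d^i=0$ is closed, so complexes of this shape form a closed subscheme of a product of affine spaces. Inside it, the locus where $V^{\bullet}$ is $T$-flat with fibres $V^{\bullet}(t)$ quasi-isomorphic to a complex satisfying $(*)$ is open, by semicontinuity of the $\Ext^i$ together with the flatness of cohomology and the vanishing $\Ext^{-1}=0$. This yields a locally finite type $S$-scheme $R$ carrying a tautological complex. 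Sending a family of such complexes to the class of $V^{\bullet}$ in $D(X_T)$ defines a morphism $R\to\mathrm{Splcpx}\uet_{X/S}$; by the previous paragraph it is surjective, and it is smooth, its fibres being controlled by the choices of bases of the $V_i$ (acted on by the smooth group $G=\prod_i\mathrm{GL}(V_i)$) and by the homotopies of the resolution.

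The decisive step is to identify the fibre product $R\times_{\mathrm{Splcpx}\uet_{X/S}}R$. A point of it is a pair $(V^{\bullet},W^{\bullet})$ of presentations together with an isomorphism $V^{\bullet}\cong W^{\bullet}$ in $D(X_T)$ up to tensoring by a line bundle on $T$. Since both complexes are bounded complexes of the chosen twisted frees, the groups $\Hom_{D(X_T)}(V^{\bullet},W^{\bullet})$ and the isomorphisms among them are again computed by global sections of coherent sheaves with controlled cohomology, so this fibre product is represented by an $S$-scheme and both projections to $R$ are smooth. Moreover the stabilizer of a point is $\mathrm{Aut}_{D}(V^{\bullet})$ modulo line-bundle twists, which by condition $(*)$, namely $\Hom(E^{\bullet}(t),E^{\bullet}(t))\cong k(t)$ together with $\Ext^{-1}=0$, consists precisely of the scalars ${\mathbb G}_m$. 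These scalars act trivially on the class in $\mathrm{Splcpx}$ and are exactly what the combination of the relation $\sim$ (line-bundle twists) and the \'etale sheafification divides out. Hence, after this rigidification the smooth groupoid $R\times_{\mathrm{Splcpx}\uet_{X/S}}R\rightrightarrows R$ has trivial stabilizers, i.e.\ is an equivalence relation; its fibre product is a scheme and its projections are smooth, so $\mathrm{Splcpx}\uet_{X/S}$ is a sheaf with representable diagonal admitting a smooth surjection from the scheme $R$, and is therefore an algebraic space. The deformation and obstruction theory recalled in Section 2, with $\Ext^1$ as tangent space and $\omega(E^{\bullet})\in\Ext^2$ as obstruction, further shows that it is locally of finite presentation over $S$.

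I expect the main obstacle to be precisely this third step: representing $R\times_{\mathrm{Splcpx}\uet_{X/S}}R$ by a scheme and proving that the stabilizers collapse to ${\mathbb G}_m$ after rigidification. This requires handling $\Hom$-groups and quasi-isomorphisms in the derived category \emph{in families} rather than fibrewise, and verifying that condition $(*)$ genuinely forces the automorphism group of each object to be only the scalars, so that the \'etale quotient is an algebraic space and not merely a ${\mathbb G}_m$-gerbe. A secondary technical point is to glue the pieces constructed for the various discrete invariants $(r_i,m_i)$ into a single algebraic space, which follows once one checks that the resulting open subfunctors agree on overlaps.
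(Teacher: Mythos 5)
You should first be aware that the paper does not prove this theorem at all: it is quoted from [Inaba, \emph{J.\ Math.\ Kyoto Univ.}\ 42 (2002), Theorem 0.2], with a pointer to Lieblich's generalization. Your outline does follow the same general strategy as that reference (a parameter scheme of complexes of twisted frees with fixed discrete invariants, a group $\prod_i\mathrm{GL}(V_i)$ acting with scalar stabilizers, and an \'etale-sheaf quotient in the style of Altman--Kleiman's construction of the moduli of simple sheaves), so the architecture is right. But as a proof it has a genuine gap, and it is exactly the one you flag and then defer: the representability of $R\times_{\mathrm{Splcpx}\uet_{X/S}}R$, equivalently of the functor of quasi-isomorphisms between two families of complexes. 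This is where essentially all of the content of the cited theorem lives. Concretely, you need: (i) a relative $\Ext$ theory for bounded complexes with base-change statements, so that $\Ext^{-1}(E^{\bullet}(t),E^{\bullet}(t))=0$ forces ${\mathcal Hom}$ of the families to commute with base change and makes both condition $(*)$ and the locus where a map of complexes is a quasi-isomorphism \emph{open}; without this, the assertions ``the locus \dots is open by semicontinuity'' and ``this fibre product is represented by an $S$-scheme'' are unsupported. Likewise, the smoothness of $R\to\mathrm{Splcpx}\uet_{X/S}$ requires an infinitesimal lifting argument for presentations (that every deformation of the derived-category object lifts to a deformation of the differentials $d^i$, using the vanishing of higher cohomology of $V_i^{\vee}\otimes V_{i+1}\otimes{\mathcal O}(m_i-m_{i+1})$), which you assert but do not carry out.

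Two smaller points. First, ``the locus where $V^{\bullet}$ is $T$-flat'' is vacuous: the terms $V_i\otimes{\mathcal O}_{X_T}(-m_i)$ are locally free, so flatness is automatic; what must be cut out is the locus where the fibres satisfy $(*)$, and that again rests on the base-change theory above. Second, the theorem is stated for $X$ merely projective and flat over a noetherian $S$, not smooth, so a bounded complex need not admit a resolution by finitely many twisted frees; the complex $V^{\bullet}$ is in general unbounded to the left, and your atlas ``a closed subscheme of a product of affine spaces'' only makes sense after a truncation argument (replacing the tail by a kernel sheaf and invoking Quot-scheme boundedness). Neither issue is fatal --- both are resolved in the cited reference --- but as written your proposal is an outline of the known strategy rather than a proof, with the decisive step acknowledged but not supplied.
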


(Proof is in [\cite{inaba1}, Theorem 0.2].
This result was generalized by Lieblich in \cite{Lieblich}
for $X$ proper over $S$.)

\begin{theorem}
 If $X$ is an abelian or a projective K3 surface over an algebraically
 closed field $k$,
 $\mathrm{Splcpx}\uet_{X/k}$ is smooth over $k$.
\end{theorem}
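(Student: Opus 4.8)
The plan is to verify formal smoothness of the algebraic space $\mathrm{Splcpx}\uet_{X/k}$ through the infinitesimal lifting criterion, reducing everything to the vanishing of an obstruction class that I control by passing to its trace. So let $A\to A/I$ be a small extension of Artinian local $k$-algebras with $m_A I=0$, and let $E^{\bullet}$ be an object over $X_{A/I}$ representing a point of $\mathrm{Splcpx}\uet_{X/k}(A/I)$, with central fibre $E_0:=E^{\bullet}\otimes^{\mathbf{L}}k$. By Proposition~2.1, $E^{\bullet}$ lifts to an object of $D^b(\mathrm{Coh}(X_A))$ of finite $\mathrm{Tor}$ dimension over $A$ if and only if the obstruction class $\omega(E^{\bullet})$ vanishes in $\Ext^2_X(E_0,E_0)\otimes_k I$. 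Since these $A\to A/I$ generate all small extensions, smoothness is equivalent to the assertion that this obstruction always vanishes.

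First I would analyse the trace map $\mathrm{tr}\colon \Ext^2_X(E_0,E_0)\to H^2(X,{\mathcal O}_X)$. Because $X$ is abelian or K3 we have $\omega_X\cong{\mathcal O}_X$, so Serre duality in $D^b(\mathrm{Coh}(X))$ gives $\Ext^2_X(E_0,E_0)\cong\Hom_X(E_0,E_0)^{\vee}$, while the defining condition $(*)$ forces $\Hom_X(E_0,E_0)\cong k$; hence both $\Ext^2_X(E_0,E_0)$ and $H^2(X,{\mathcal O}_X)$ are one-dimensional. The trace map is Serre dual to the unit map $H^0(X,{\mathcal O}_X)\to\Hom_X(E_0,E_0)$ sending $1\mapsto\mathrm{id}_{E_0}$, which is an isomorphism $k\to k$; dualizing shows $\mathrm{tr}$ is an isomorphism. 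Consequently $\omega(E^{\bullet})=0$ if and only if $\mathrm{tr}(\omega(E^{\bullet}))=0$, and the entire problem reduces to computing this single trace.

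The core of the argument is thus to show $\mathrm{tr}(\omega(E^{\bullet}))=0$ in $H^2(X,{\mathcal O}_X)\otimes_k I$. Here I would start from the cocycle description $\omega(E^{\bullet})=[\{\delta^i\}]$ with $\delta^i=\tilde{d}^{i+1}\circ\tilde{d}^i$, so that $\mathrm{tr}(\omega(E^{\bullet}))=[\{\mathrm{tr}(\delta^i)\}]$, and then invoke Lemma~\ref{obstruction=} to transport the computation onto the \v{C}ech model. The claim to establish is that $\mathrm{tr}(\omega(E^{\bullet}))$ coincides with the obstruction $o(\det E^{\bullet})\in H^2(X,{\mathcal O}_X)\otimes_k I$ to lifting the determinant line bundle $\det E^{\bullet}$ of the complex along $A\to A/I$, as governed by Proposition~2.2. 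Granting this, the proof finishes at once: on an abelian or a projective K3 surface the Picard scheme $\mathrm{Pic}_{X/k}$ is smooth over $k$ — for K3, $H^1(X,{\mathcal O}_X)=0$ makes it \'etale, while for an abelian surface $\mathrm{Pic}^0_{X/k}$ is the dual abelian variety — so every line bundle on $X_{A/I}$ lifts to $X_A$ and $o(\det E^{\bullet})=0$. Injectivity of $\mathrm{tr}$ then yields $\omega(E^{\bullet})=0$, giving the desired lift of $E^{\bullet}$ and hence smoothness.

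The main obstacle is the trace computation of the previous paragraph, namely the identification of $\mathrm{tr}(\omega(E^{\bullet}))$ with the determinant obstruction $o(\det E^{\bullet})$. Since $E^{\bullet}$ is a genuine complex rather than a single sheaf, this trace must be taken over the full Hom-complex of a locally free resolution, and a direct computation is unwieldy — precisely the difficulty that Section~2 is designed to remove. Lemma~\ref{obstruction=}, by matching the complex-theoretic class $\omega$ with the \v{C}ech-theoretic class $o$ on locally free pieces, is what lets one perform the trace on the \v{C}ech model, where it collapses to the classical determinant calculation carried out in \cite{mukai}.
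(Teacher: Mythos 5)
Your overall strategy --- reduce smoothness to the vanishing of $\omega(E^{\bullet})$ for small extensions, prove that $H^2(\mathrm{tr}^{\bullet})$ is an isomorphism by Grothendieck--Serre duality and simplicity, and kill the image of the obstruction by relating it to a determinant obstruction that vanishes because $\Pic_{X/k}$ is smooth --- is exactly the paper's strategy, and those parts of your write-up are sound. The gap is in the step you yourself flag as ``the main obstacle'': the identification of $H^2(\mathrm{tr})(\omega(E^{\bullet}))$ with $o(\det E^{\bullet})$. You assert that Lemma \ref{obstruction=} ``lets one perform the trace on the \v{C}ech model,'' but that lemma compares $\omega(F)$ with $o(F)$ only for a single locally free sheaf $F$; it supplies no \v{C}ech model for the obstruction class of a genuine complex, and you give no construction of $o(\det E^{\bullet})$ for a complex at all. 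So the central identity of your proof is stated but not proved, and the route you indicate for proving it does not apply as stated.

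The paper closes this gap by an extra reduction that your proposal is missing. One first normalizes $E^{\bullet}$ so that its lowest nonzero term $E^{l'}$ is a vector bundle and all higher terms are direct sums $V_i\otimes{\mathcal O}(-m_i)$, which lift for trivial reasons. One then proves that the map $\sigma:\Ext^2(E^{l'}[-l'],I\otimes E^{\bullet})\to\Ext^2(E^{\bullet},I\otimes E^{\bullet})$ is surjective --- this uses that the canonical morphism $\iota:E^{\bullet}_0\to E^{l'}_0[-l']$ is nonzero, Serre duality, and $\Ext^2(E^{\bullet}_0,E^{\bullet}_0)\cong k$ --- so the obstruction class lifts to a class $\varphi$ supported on the single bundle $E^{l'}$. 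The commutative diagram $(\dag\dag)$ together with an explicit cocycle computation shows $\tau(\varphi)=\omega(E^{l'})$, the obstruction of the vector bundle $E^{l'}$ alone; only at this point does Lemma \ref{obstruction=} apply, giving $\omega(E^{l'})=o(E^{l'})$ and hence $H^2(\mathrm{tr})(\omega(E^{\bullet}))=\pm\, o(\det E^{l'})=0$. Without the surjectivity of $\sigma$ and the diagram $(\dag\dag)$, your argument never reaches the vector-bundle setting where the determinant calculation of Mukai is actually available, so the proof as proposed is incomplete.
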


\begin{proof}
Take an artinian local ring $A$ over $k$ with residue field $k=A/m$
and an ideal $I$ of $A$ such that $mI=0$.
It is sufficient to show that
$\mathrm{Splcpx}_{X/k}(A)\to\mathrm{Splcpx}_{X/k}(A/I)$ is surjective.
Indeed we can take a scheme $U$ locally of finite type over $k$
and a morphism $p:U\rightarrow\splcpx_{X/k}$ such that
the composite
$U\stackrel{p}\rightarrow\splcpx_{X/k}\stackrel{\iota}\rightarrow
\splcpx_{X/k}\uet$
is \'etale and surjective.
Take any artinian local ring $A$ over $k$ with residue field $k=A/m$
and an ideal $I$ of $A$ such that $mI=0$.
Take any member $x\in U(A/I)$.
By the surjectivity of $\splcpx_{X/k}(A)\rightarrow\splcpx_{X/k}(A/I)$,
we can take an element $y\in\splcpx_{X/k}(A)$ such that
$y\otimes A/I=p(x)$.
Then $\iota(y)\in\splcpx_{X/k}\uet(A)$
and $\iota(y)\otimes A/I=(\iota\circ p)(x)$.
Since $\iota\circ p:U\rightarrow\splcpx_{X/k}\uet$ is \'etale,
there is an element $z\in U(A)$ such that
$z\otimes A/I=x$ and $(\iota\circ p)(z)=y$.
Thus $U$ is smooth over $k$.

Let $E^{\bullet}$ be an $A/I$-valued point of $\mathrm{Splcpx}_{X/k}$.
Put $E^{\bullet}_0:=E^{\bullet}\otimes k$
and
\[
 l':=\min \{ i | \text{$H^i( E^{\bullet}_0\otimes^{\mathbf{L}} k(x))\neq 0$ for some $x\in X$}\}.
\]
We may assume that $E^{\bullet}$ is of the form
\[
 \cdots \longrightarrow 0 \longrightarrow 0 \longrightarrow E^{l'}
 \stackrel{d^{l'}_{E^{\bullet}}}\longrightarrow V^{l'+1} \stackrel{d^{l'+1}}\longrightarrow \cdots \longrightarrow V^l
 \stackrel{d^l}\longrightarrow 0 \longrightarrow 0 \cdots,
\]
where $E^{l'}$ is a vector bundle on $X_{A/I}$,
$V^i=V_i\otimes{\mathcal O}_{X_{A/I}}(-m_i)$
with $V_i$ a finite dimensional vector space over $k$,
${\mathcal O}_X(1)$ a fixed ample line bundle on $X$ and
$1\ll m_l \ll m_{l-1} \ll \cdots \ll m_{l'+1}$.
We can see that
$d^{l'}_{E^{\bullet}_0}\otimes k(x)$ is not injective for some $x\in X$.
Take a resolution
\[
 \cdots\longrightarrow V_i\otimes{\mathcal O}_{X_{A/I}}(-m_i)\longrightarrow\cdots
 \longrightarrow V_{l'}\otimes{\mathcal O}_{X_{A/I}}(-m_{l'})
 \stackrel{\pi}\longrightarrow E^{l'}
 \longrightarrow 0,
\]
where each $V_i$ is a vector space over $k$ of finite dimension and 
\[
 m_{l'+1}\ll m_{l'}\ll \cdots \ll m_i\ll m_{i-1} \ll \cdots.
\]
We put $V^i=V_i\otimes{\mathcal O}_{X_{A/I}}(-m_i)$
for $i\leq l$ and $V^i=0$ for $i>l$.
Let $V^{\bullet}$ be the complex
\[
 \cdots\longrightarrow V^i\longrightarrow V^{i+1}
 \longrightarrow\cdots\longrightarrow V^{l'}
 \xrightarrow{d^{l'}_{E^{\bullet}}\circ\pi}
 V^{l'+1}\longrightarrow\cdots\longrightarrow
 V^l\longrightarrow 0\longrightarrow\cdots.
\]
Then there is a canonical quasi-isomorphism
\[
 V^{\bullet} \longrightarrow E^{\bullet}.
\]
Put $V^{\bullet}_0:=V^{\bullet}\otimes k$.
Let
\[
 \mathrm{tr}^{\bullet}:{\mathcal Hom}^{\bullet}(E^{\bullet}_0,E^{\bullet}_0)
 \stackrel{\sim}\longrightarrow
 {\mathcal Hom}^{\bullet}({\mathcal Hom}
 ^{\bullet}(E^{\bullet}_0,E^{\bullet}_0),{\mathcal O}_X)
 \longrightarrow {\mathcal O}_X
\]
be the dual of the canonical morphism
\[
 {\mathcal O}_X \longrightarrow
 {\mathcal Hom}^{\bullet}(E^{\bullet}_0,E^{\bullet}_0) ;
 \quad 1\mapsto \mathrm{id}_{E^{\bullet}_0}.
\]
Note that
$\mathrm{tr}^p=0$ on ${\mathcal Hom}^p(E_0^{\bullet},E_0^{\bullet})$ for $p\neq 0$
and $\mathrm{tr}^0(\{x^i\})=\sum_i (-1)^i \mathrm{tr}(x^i)$
for $x^i \in {\mathcal Hom}(E^i_0,E^i_0)$.
$\mathrm{tr}^{\bullet}$ is also introduced in [\cite{H-L}, Chapter 10].
There is a commutative diagram
\[
 \begin{CD}
 \Ext^2_X(E^{\bullet}_0,E^{\bullet}_0) @>H^2(\mathrm{tr}^{\bullet})>>
  H^2(X,{\mathcal O}_X) \\
 @V s_1 V\cong V  @V s_2 V\cong V \\
 \Hom_{D(X)}(E^{\bullet}_0,E^{\bullet}_0)^{\vee} @>>>
  H^0(X,{\mathcal O}_X)^{\vee},
\end{CD}
\]
where $s_1,s_2$ are the isomorphisms determined  by
Grothendieck-Serre duality and the bottom row is the dual of
$k=H^0({\mathcal O}_X)\to \Hom_{D(X)}(E^{\bullet}_0,E^{\bullet}_0)$,
which is bijective since $E^{\bullet}_0$ is simple.
Thus the homomorphism
\[
 \Ext^2_X(E^{\bullet}_0,E^{\bullet}_0)
 \xrightarrow{H^2(\mathrm{tr}^{\bullet})} H^2(X,{\mathcal O}_X) 
\]
is an isomorphism.

Note that there is a commutative diagram
\[
 \begin{CD}
  {\mathcal Hom}^{\bullet}(E^{l'}[-l'],I\otimes E^{\bullet})
  @>>> {\mathcal Hom}(E^{l'},I\otimes E^{l'}) \\ 
  @VVV   @VV (-1)^{l'}\mathrm{tr} V \\
  {\mathcal Hom}^{\bullet}(E^{\bullet},I\otimes E^{\bullet})
  @>\mathrm{tr}>> {\mathcal O}_X\otimes I.
 \end{CD}
\]
From the above commutative diagram, we obtain a commutative diagram
\[
(\dag\dag) \quad
 \begin{CD}
  \Ext^2(E^{l'}[-l'],I\otimes E^{\bullet}) @>\tau>> \Ext^2(E^{l'},I\otimes E^{l'}) \\
  @V\sigma VV   @V (-1)^{l'}H^2(\mathrm{tr}) VV   \\
  \Ext^2(E^{\bullet},I\otimes E^{\bullet}) @>H^2(\mathrm{tr})>> H^2({\mathcal O}_X)\otimes I.
 \end{CD}
\]
Note that the morphism
\[
 \Hom(E^{\bullet}_0,E^{\bullet}_0)\longrightarrow \Hom(E^{\bullet}_0,E^{l'}_0[-l'])
\]
is not zero, since the image of $\mathrm{id}$ by this morphism
is the canonical morphism $\iota:E^{\bullet}_0\rightarrow E^{l'}_0[-l']$
which is not zero because
\[
 H^{l'}(\iota\otimes k(x)):\ker(d^{l'}_{E^{\bullet}_0}\otimes k(x))= H^{l'}(E^{\bullet}_0\otimes k(x))\longrightarrow
 H^l(E^{l'}_0[-l']\otimes k(x))=E^{l'}_0\otimes k(x)
\]
is not zero.
By Grothendieck-Serre duality, we can see that
\[
 \iota^*:\Ext^2(E^{l'}_0[-l'],E^{\bullet}_0)\longrightarrow \Ext^2(E^{\bullet}_0,E^{\bullet}_0)
\]
is not zero.
Since $\Ext^2(E^{\bullet}_0,E^{\bullet}_0)\cong k$, $\iota^*$ is surjective.
So the morphism
\[
 \sigma: \Ext^2(E^{l'}[-l'],I\otimes E^{\bullet})\longrightarrow
 \Ext^2(E^{\bullet},I\otimes E^{\bullet})
\]
is also surjective.

Take an obstruction class $\omega(E^{\bullet})\in\Ext^2(E^{\bullet},I\otimes E^{\bullet})$
for the lifting of $E^{\bullet}$ to an $A$-valued point of $\splcpx_{X/k}$.
Then there is a member $\varphi=[(\varphi^i)]\in \Ext^2(E^{l'}[-l'], I\otimes E^{\bullet})$
such that
$\sigma(\varphi)=\omega(E^{\bullet})$.
Here $\varphi^i:V^i\rightarrow I\otimes E^{i+2}$ ($i\leq l'$)
and $\varphi^i=0$ for $i>l'$.
There is an element $\gamma=(\gamma^i)\in \Hom^1(V^{\bullet},I\otimes E^{\bullet})$ such that
\begin{gather*}
 \gamma^{i+1}\circ d^i_{V^{\bullet}}+d^{i+1}_{E^{\bullet}}\circ\gamma^i=\tilde{d}^{i+1}\circ\tilde{d}^i-\varphi^i
 \quad (\text{for $i\geq l'-1$}) \\
 \gamma^{l'-1}\circ d^{l'-2}_{V^{\bullet}}=\pi\circ\tilde{d}^{l'-1}\circ\tilde{d}^{l'-2}-\varphi^{l'-2},
\end{gather*}
where $\tilde{d}^i:V_i\otimes{\mathcal O}_{X_A}(-m_i)\rightarrow V_{i+1}\otimes{\mathcal O}_{X_A}(-m_{i+1})$
is a lift of $d^i_{V^{\bullet}}$.
We can see that the image of $\varphi$ by the morphism
$\tau:\Ext^2(E^{l'}[-l'],I\otimes E^{\bullet})\rightarrow \Ext^2(E^{l'},I\otimes E^{l'})$
is given by
$[\pi\circ\tilde{d}^{l'-1}\circ\tilde{d}^{l'-2}]$,
which is just the obstruction class $\omega(E^{l'})$.
By Lemma \ref{obstruction=}, we have $\omega(E^{l'})=o(E^{l'})$.
We can see that $H^2(\mathrm{tr})(o(E^{l'}))=o(\det(E^{l'}))$.
Since the Picard scheme $\Pic_{X/k}$ is smooth over $k$,
we have $o(\det(E^{l'}))=0$.
So we have
\begin{align*}
 H^2(\mathrm{tr})(\omega(E^{\bullet})) &= H^2(\mathrm{tr})(\sigma(\varphi)) \\
 &= (-1)^{l'}H^2(\mathrm{tr})(\tau(\varphi))  \\
 &= (-1)^{l'}H^2(\mathrm{tr})(\omega(E^{l'}))  \\
 &=(-1)^{l'}H^2(\mathrm{tr})(o((E^{l'}))) \\
 &=(-1)^{l'}o(\det(E'))=0.
\end{align*}
Since the morphism
\[
 H^2(\mathrm{tr}):\Ext^2(E,I\otimes E) \longrightarrow
 H^2({\mathcal O}_X)\otimes I
\]
is isomorphic, we have
$\omega(E^{\bullet})=0$.
Thus $\splcpx\uet_{X/k}$ is smooth over $k$.
\end{proof}

The following theorem is essentially proved in
[\cite{H-L},II-10].
We give a proof again.

\begin{theorem}
 Let $X$ be an abelian or a projective K3 surface over
 an algebraically closed field $k$.
 Then $\mathrm{Splcpx}\uet_{X/k}$ has a symplectic structure,
 that is, there exists a closed $2$-form on
 $\mathrm{Splcpx}\uet_{X/k}$
 which is nondegenerate at every point.
\end{theorem}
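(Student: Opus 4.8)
The plan is to build the symplectic form pointwise from the Yoneda product and the trace map $\mathrm{tr}^{\bullet}$ of Section 2, then to establish nondegeneracy by Serre duality and closedness by a global computation on the smooth chart of Theorem 3.2. First I would identify the tangent space. Applying the obstruction theory of Section 2 with $A=k[\epsilon]/(\epsilon^2)$ and $I=(\epsilon)$, and noting that $\Pic(\Spec A)=0$ for an Artinian local ring $A$ so that the twisting relation $\sim$ is trivial over such bases, deformation theory gives a canonical isomorphism $T_{[E^{\bullet}]}\splcpx\uet_{X/k}\cong\Ext^1_X(E^{\bullet},E^{\bullet})$. For $a,b\in\Ext^1_X(E^{\bullet},E^{\bullet})$ I then set
\[
 \sigma_{[E^{\bullet}]}(a,b):=H^2(\mathrm{tr}^{\bullet})(a\circ b),
\]
where $a\circ b\in\Ext^2_X(E^{\bullet},E^{\bullet})$ is the Yoneda product and $H^2(\mathrm{tr}^{\bullet})\colon\Ext^2_X(E^{\bullet},E^{\bullet})\to H^2(X,\mathcal{O}_X)\cong k$ is the trace isomorphism established in the proof of Theorem 3.2 (here $H^2(X,\mathcal{O}_X)\cong k$ precisely because $X$ is abelian or K3). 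The graded cyclicity $\mathrm{tr}(f\circ g)=(-1)^{(\deg f)(\deg g)}\mathrm{tr}(g\circ f)$ of the trace then gives $\sigma_{[E^{\bullet}]}(a,b)=-\sigma_{[E^{\bullet}]}(b,a)$, so $\sigma$ is alternating.

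For nondegeneracy I would invoke Grothendieck--Serre duality on the surface $X$, which furnishes a perfect pairing
\[
 \Ext^1_X(E^{\bullet},E^{\bullet})\times\Ext^1_X(E^{\bullet},E^{\bullet}\otimes K_X)\longrightarrow\Ext^2_X(E^{\bullet},E^{\bullet}\otimes K_X)\xrightarrow{\ \mathrm{tr}\ }H^2(X,K_X)\cong k.
\]
Since $X$ is abelian or K3 we have $K_X\cong\mathcal{O}_X$; fixing such an isomorphism identifies this pairing with $\sigma_{[E^{\bullet}]}$ up to sign, whence $\sigma_{[E^{\bullet}]}$ is nondegenerate at every point. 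This is exactly the place where the hypothesis on $X$ is used, in parallel with the isomorphism $H^2(\mathrm{tr})$ exploited for smoothness.

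The main obstacle is closedness. To address it I would pass to the smooth scheme $U$ with the \'etale surjection $U\to\splcpx\uet_{X/k}$ of Theorem 3.2 and work with a universal complex $\mathcal{E}^{\bullet}$ on $X\times U$; although $\mathcal{E}^{\bullet}$ is defined only \'etale locally and only up to twist by a line bundle pulled back from $U$, the complexes $R\pi_*R\mathcal{H}om(\mathcal{E}^{\bullet},\mathcal{E}^{\bullet})$ are insensitive to such twists and so descend, where $\pi\colon X\times U\to U$ denotes the projection. The Kodaira--Spencer map identifies $T_U$ with $R^1\pi_*R\mathcal{H}om(\mathcal{E}^{\bullet},\mathcal{E}^{\bullet})$, and, writing $A_U\in\Ext^1_{X\times U}(\mathcal{E}^{\bullet},\mathcal{E}^{\bullet}\otimes\pi^*\Omega_U)$ for the relative Atiyah (Kodaira--Spencer) class, the form $\sigma$ is realised globally as
\[
 \sigma=\pi_*\,\mathrm{tr}(A_U\circ A_U)\in H^0(U,\Omega_U^2),
\]
the trace being followed by the isomorphism $R^2\pi_*\mathcal{O}_{X\times U}\cong\mathcal{O}_U\otimes_k H^2(X,\mathcal{O}_X)\cong\mathcal{O}_U$; restricting to a fibre recovers the pointwise form above.

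It then remains to prove $d\sigma=0$, which I expect to be the genuinely hard step and for which I would follow [\cite{H-L}, II-10]. The idea is to differentiate the global expression for $\sigma$ and to use a Bianchi-type identity for the Atiyah class together with the graded cyclicity of $\mathrm{tr}$ to rewrite $d\sigma$ as a multiple of the triple Yoneda trace $\pi_*\,\mathrm{tr}(A_U\circ A_U\circ A_U)$. This last class is governed, through the Leray spectral sequence for $\pi$, by $R^3\pi_*\mathcal{O}_{X\times U}$, which vanishes because the fibre $X$ has dimension $2$; hence $d\sigma=0$. Finally I would verify that $\sigma$ descends from $U$ to $\splcpx\uet_{X/k}$: as the construction uses only the twist-invariant complexes $R\pi_*R\mathcal{H}om(\mathcal{E}^{\bullet},\mathcal{E}^{\bullet})$ and is compatible with base change, \'etale descent along $U\to\splcpx\uet_{X/k}$ produces a well-defined closed $2$-form that is nondegenerate at every point, completing the proof.
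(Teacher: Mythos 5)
Your construction of the pairing coincides with the paper's: both send $(a,b)$ to the Yoneda product $a\circ b\in\Ext^2(E^{\bullet},E^{\bullet})$ and then project to $k$, and the paper's projection $\Ext^2(E^{\bullet},E^{\bullet})\stackrel{\sim}\longrightarrow\Hom(E^{\bullet},E^{\bullet})^{\vee}\stackrel{\sim}\longrightarrow k$ agrees with your $H^2(\mathrm{tr}^{\bullet})$ by the commutative square in the proof of smoothness; nondegeneracy via Grothendieck--Serre duality is also identical. Two of your steps diverge from the paper. For antisymmetry you invoke graded cyclicity of the trace, which yields $\sigma(a,b)=-\sigma(b,a)$ but not the alternating identity $\sigma(a,a)=0$ when $\mathrm{char}\,k=2$; the paper instead identifies $\alpha(v,v)$ with the obstruction $\omega(\tilde{V}^{\bullet})$ to extending the corresponding first-order deformation over $k[t]/(t^3)$, which vanishes by the smoothness theorem just proved --- an argument valid in every characteristic and worth adopting.

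The genuine gap is closedness. Your plan is the classical one (a Bianchi-type identity for the Atiyah/Kodaira--Spencer class, after which a class of fibre-degree $3$ on a surface dies), but the identity $d\sigma=c\cdot\pi_*\mathrm{tr}(A_U\circ A_U\circ A_U)$ is precisely the hard content and you do not establish it: in the algebraic category the Kodaira--Spencer class is only a cohomology class, the exterior derivative of a fibre integral is not a priori a cohomological operation on $X\times U$, and a cochain-level Maurer--Cartan identity must be produced before any Leray/K\"unneth vanishing can be invoked. (Note also that $\sigma$ itself is extracted from the $R^2\pi_*{\mathcal O}$-component, so the bookkeeping that places $d\sigma$ in a group controlled by $R^3\pi_*{\mathcal O}_{X\times U}$ is exactly what needs justification.) The paper resolves this by a self-contained computation: it rigidifies $E^{\bullet}_U$ by a complex $V^{\bullet}$ with terms $V_i\otimes{\mathcal O}_{X_U}(-m_i)$, represents a tangent vector $u$ by the honest cochain $\{D_u(d^i_{V^{\bullet}})\}$ obtained by differentiating the differentials, writes $\alpha(u,v)=[\{D_u(d^{i+1}_{V^{\bullet}})\circ D_v(d^i_{V^{\bullet}})\}]$, and expands $d\alpha(u,v,w)$ termwise until everything cancels against $D_uD_vD_w(d^{i+1}_{V^{\bullet}}\circ d^i_{V^{\bullet}})=D_uD_vD_w(0)=0$ up to a coboundary. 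Some such explicit cochain-level argument (or a precise citation of the corresponding computation in Huybrechts--Lehn, Chapter 10) is needed to complete your proof.
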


\begin{proof}
Note that the tangent bundle $T_{\mathrm{Splcpx}\uet_{X/k}}$
on $\mathrm{Splcpx}\uet_{X/k}$ can be considered as the sheaf
on the small \'{e}tale site on $\mathrm{Splcpx}\uet_{X/k}$ defined by
\[
 U\mapsto
 \left\{ v\in \mathrm{Splcpx}\uet_{X/k}(U_{k[\epsilon]}) \left|
 \begin{array}{l}
 \text{the composite 
 $U\stackrel{i_0}\rightarrow U_{k[\epsilon]}
 \stackrel{v}\rightarrow \mathrm{Splcpx}\uet_{X/k}$} \\
 \mbox{is the structure morphism $U\to \mathrm{Splcpx}\uet_{X/k}$}
 \end{array}
 \right\}\right.,
\]
for any algebraic space $U$ \'{e}tale over $\mathrm{Splcpx}\uet_{X/k}$,
where $k[\epsilon]$ is the $k$-algebra generated by $\epsilon$
with $\epsilon^2=0$ and 
$U\stackrel{i_0}\rightarrow U_{k[\epsilon]}$
is the morphism induced by the ring homomorphism
\[
 k[\epsilon]\longrightarrow k; \quad \epsilon\mapsto 0.
\]

There is an \'{e}tale covering 
$\coprod_i U_i \to \mathrm{Splcpx}\uet_{X/k}$
such that $U_i\to \mathrm{Splcpx}\uet_{X/k}$ factors through
$\mathrm{Splcpx}_{X/k}$, that is, there is a universal family
$E^{\bullet}_{U_i}$ on each $X_{U_i}$.
Let $U$ be an affine scheme \'{e}tale over $\coprod_i U_i$
and $E^{\bullet}_U$ be the pull-back of the universal family.
Take any element $v\in T_{\mathrm{Splcpx}\uet_{X/k}}(U)$.
Then $U_{k[\epsilon]}\stackrel{v}\longrightarrow\mathrm{Splcpx}_{X/k}\uet$
factors through $\coprod_i U_i$,
since $\coprod_i U_i$ is \'etale over $\splcpx_{X/k}\uet$.
Let $E^{\bullet}_{U_{k[\epsilon]}}\in\mathrm{Splcpx}_{X/k}(U_{k[\epsilon]})$ 
be the pull-back of the universal family.
We can take a complex $\tilde{V}^{\bullet}$ of the form
$\tilde{V}^i=V_i\otimes_{{\mathcal O}_U}
{\mathcal O}_{X_{U_{k[\epsilon]}}}(-m_i)$
and a quasi-isomorphism $\tilde{V}^{\bullet}\to E^{\bullet}_{U_{k[\epsilon]}}$,
where $V_i$ is a locally free sheaf of finite rank on $U$,
$V_i=0$ for $i\gg 0$,
${\mathcal O}_X(1)$ is a fixed ample line bundle on $X$
and $\cdots\gg m_i\gg m_{i+1} \gg\cdots$.
Let $V^{\bullet}$ be the pull-back of $\tilde{V}^{\bullet}$ by
$X\times U\xrightarrow{\mathrm{id}_X\times i_0}
X\times U_{k[\epsilon]}$.
Then we obtain an element
\[
 [\{ d^i_{\tilde{V}^{\bullet}}-d^i_{V^{\bullet}}\otimes 1 \}]
 \in H^1(\Hom(\tilde{V}^{\bullet},\epsilon k[\epsilon]\otimes \tilde{V}^{\bullet}))
 \cong \Ext^1(E^{\bullet}_U,E^{\bullet}_U),
\]
which is independent of the choice of the representative
$\tilde{V}^{\bullet}$.
We can see that the mapping 
$v\to [\{ d^i_{\tilde{V}^{\bullet}}-d^i_{V^{\bullet}}\otimes 1 \}]$
defines an isomorphism
\[
 T_{\mathrm{Splcpx}_{X/k}\uet}(U)\stackrel{\sim}\longrightarrow
 H^0(U,\Ext^1_{X_U/U}(E^{\bullet}_U,E^{\bullet}_U)).
\]

For an affine scheme $U$ \'{e}tale over $\coprod_i U_i$,
there is a canonical pairing:
\[
 \begin{array}{ccc}
 \alpha_U:\Ext^1_{X_U/U}(E^{\bullet}_U,E^{\bullet}_U)\times
 \Ext^1_{X_U/U}(E^{\bullet}_U,E^{\bullet}_U) & \longrightarrow 
 & \Ext^2_{X_U/U}(E^{\bullet}_U,E^{\bullet}_U) \\
 (g,h) & \mapsto & g\circ h.
 \end{array}
\]
Note that there are canonical isomorphisms
\[
 \Ext^2_{X_U/U}(E^{\bullet}_U,E^{\bullet}_U)\stackrel{\sim}\longrightarrow
 \Ext^0_{X_U/U}(E^{\bullet}_U,E^{\bullet}_U)^{\vee}
 \stackrel{\sim}\longrightarrow {\mathcal O}_U.
\]
Then we can obtain a pairing
\[
 \alpha: T_{\mathrm{Splcpx}_{X/k}\uet}\times T_{\mathrm{Splcpx}_{X/k}\uet}
 \longrightarrow {\mathcal O}_{\mathrm{Splcpx}_{X/k}\uet}
\]
by patching $\alpha_U$.

Now we will see that $\alpha$ is skew-symmetric.
Take any $k$-valued point $p$ of
$\mathrm{Splcpx}_{X/k}\uet$.
$p$ corresponds to a complex
\[
 \cdots \longrightarrow V_i\otimes{\mathcal O}_X(-m_i)
 \stackrel{d^i_{V^{\bullet}}}\longrightarrow
 V_{i+1}\otimes{\mathcal O}_X(-m_{i+1})
 \stackrel{d^{i+1}_{V^{\bullet}}}\longrightarrow\cdots
 \longrightarrow V_l\otimes{\mathcal O}_X(-m_l)
 \longrightarrow 0\longrightarrow\cdots
\]
We denote this complex by $V^{\bullet}$.
Let us consider the restriction
\[
 \alpha(p):\Ext^1(V^{\bullet},V^{\bullet})\times \Ext^1(V^{\bullet},V^{\bullet})
 \longrightarrow \Ext^2(V^{\bullet},V^{\bullet})\cong k
\]
of the pairing $\alpha$.
Take any element 
$v=[\{v^i\}]\in H^1(\Hom^{\bullet}(V^{\bullet},V^{\bullet}))
\cong\Ext^1(V^{\bullet},V^{\bullet})$
and let $\tilde{V}^{\bullet}$ be a member of
$\splcpx_{X/k}(k[\epsilon])$
which corresponds to $v$.
$\tilde{V}^{\bullet}$ can be given by the complex
\[
 \cdots\longrightarrow
 V_i\otimes{\mathcal O}_X(-m_i)\otimes k[\epsilon]
 \xrightarrow{d^i_{V^{\bullet}}+\epsilon v^i}
 V_{i+1}\otimes{\mathcal O}_X(-m_{i+1})\otimes k[\epsilon]
 \longrightarrow\cdots
\]
Consider the surjection
$k[t]/(t^3)\to k[\epsilon];\: t\mapsto \epsilon$
and the extension
\[
 d^i_{V^{\bullet}}+tv^i:V_i\otimes{\mathcal O}_X(-m_i)\otimes k[t]/(t^3)
 \longrightarrow V_{i+1}\otimes{\mathcal O}_X(-m_{i+1})\otimes k[t]/(t^3)
\]
of the homomorphism
$d^i_{V^{\bullet}}+\epsilon v^i:V_i\otimes{\mathcal O}_X(-m_i)\otimes k[\epsilon]
\rightarrow V_{i+1}\otimes{\mathcal O}_X(-m_{i+1})\otimes k[\epsilon]$.
Then the obstruction class $\omega(\tilde{V}^{\bullet})$
for the lifting of $\tilde{V}^{\bullet}$ to a member of 
$\splcpx_{X/k}(k[t]/(t^3))$
with respect to the surjection
$k[t]/(t^3)\to k[\epsilon];\: t\mapsto \epsilon$
is given by
$[\{(d^{i+1}_{V^{\bullet}}+tv^{i+1})\circ(d^i_{V^{\bullet}}+tv^i)\}]
\in(t^2)\otimes\Ext^2(V^{\bullet},V^{\bullet})$.
However,
\[
 (d^{i+1}_{V^{\bullet}}+tv^{i+1})\circ (d^i_{V^{\bullet}}+tv^i)=
 d^{i+1}_{V^{\bullet}}\circ d^i_{V^{\bullet}}
 +t(d^{i+1}_{V^{\bullet}}\circ v^i+v^{i+1}\circ d^i_{V^{\bullet}})
 +t^2v^{i+1}\circ v^i=t^2v^{i+1}\circ v^i.
\]
Then
$\alpha(p)(v,v)=v\circ v=[\{v^{i+1}\circ v^i\}]=\omega(\tilde{V}^{\bullet})=0$
since $\mathrm{Splcpx}_{X/k}\uet$ is smooth over $k$.

Next we will see that $\alpha$ is nondegenerate.
The canonical isomorphism
\[
 {\bf R}{\mathcal Hom}(E^{\bullet}_U,E^{\bullet}_U)\stackrel{\sim}\longrightarrow
 {\bf R}{\mathcal Hom}^{\bullet}(E^{\bullet}_U,E^{\bullet}_U)^{\vee}
\]
induces the composite isomorphism by Grothendieck-Serre duality
\[
 \Ext^1(E^{\bullet}_U,E^{\bullet}_U)\stackrel{\sim}\longrightarrow
 \Ext^1({\bf R}{\mathcal Hom}^{\bullet}
 (E^{\bullet}_U,E^{\bullet}_U),{\mathcal O}_{X_U})
 \stackrel{\sim}\longrightarrow
 \Hom(\Ext^1(E^{\bullet}_U,E^{\bullet}_U),{\mathcal O}_U),
\]
which is just the homomorphism induced by $\alpha$.
Thus $\alpha$ is nondegenerate.

Finally we will show that $\alpha$ is $d$-closed.
For an affine scheme $U$ \'etale over $\coprod_i U_i $,
take $u,v,w\in T_{\splcpx_{X/k}^{\uet}}(U)$.
Let $E^{\bullet}\in\splcpx_{X/k}(U)$ be the pullback
of the universal family.
We may assume that there exists a complex
$V^{\bullet}$ of the form
$V^i=V_i\otimes{\mathcal O}_{X_U}(-m_i)$
such that $V^{\bullet}$ is quasi-isomorphic to $E^{\bullet}$
and that $V_i$ are vector spaces of finite dimension over $k$
and $m_i$ are integers.
Take $u\in T_U(U)$.
$u$ can be regarded as a derivation
${\mathcal O}_U\rightarrow{\mathcal O}_U$
over ${\mathcal O}_U$,
which is canonically extended to a derivation
\[
 D_u:V_i^{\vee}\otimes V_j\otimes{\mathcal O}_X(m_i-m_j)\otimes{\mathcal O}_U
 \longrightarrow
 V_i^{\vee}\otimes V_j\otimes{\mathcal O}_X(m_i-m_j)\otimes{\mathcal O}_U
\]
for $i\leq j$.
We have
$d_{V^{\bullet}}^{i+1}\circ D_u(d_{V^{\bullet}}^i)
+D_u(d_{V^{\bullet}}^{i+1})\circ d_{V^{\bullet}}^i=0$
for any $i$.
So we have
$[\{D_u(d_{V^{\bullet}}^i)\}]\in\Ext^1(V^{\bullet},V^{\bullet})$,
which corresponds to $u$ by the isomorphism
$T_U(U)\stackrel{\sim}\rightarrow\Ext^1(V^{\bullet},V^{\bullet})$.
Note that for $u,v\in T_U(U)$ we have
\[
 \alpha(u,v)=
 \left[\left\{D_u(d_{V^{\bullet}}^{i+1})\circ D_v(d_{V^{\bullet}}^i)\right\}\right]
 \in\Ext^2(V^{\bullet},V^{\bullet})\cong H^0(U,{\mathcal O}_U).
\]
For $u,v,w\in T_U(U)$, we have
\begin{align*}
 d\alpha(u,v,w)&=\left[\{
 D_u(\alpha(v,w))+D_v(\alpha(w,u))+D_w(\alpha(u,v))
 +\alpha(w,[u,v])+\alpha([u,w],v)+\alpha(u,[v,w]) \}\right] \\
 &=\left[\left\{ D_u(D_v(d_{V^{\bullet}}^{i+1})\circ D_w(d_{V^{\bullet}}^i))
 +D_v(D_w(d_{V^{\bullet}}^{i+1})\circ D_u(d_{V^{\bullet}}^i))
 +D_w(D_u(d_{V^{\bullet}}^{i+1})\circ D_v(d_{V^{\bullet}}^i)) 
 \right.\right. \\
 & \left.\left. \quad
 +D_w(d_{V^{\bullet}}^{i+1})\circ(D_uD_v-D_vD_u)(d_{V^{\bullet}}^i)
 +(D_uD_w-D_wD_u)(d_{V^{\bullet}}^{i+1})\circ D_v(d_{V^{\bullet}}^i)
 \right.\right. \\
 & \quad \left.\left.
 +D_u(d_{V^{\bullet}}^{i+1})\circ(D_vD_w-D_wD_v)(d_{V^{\bullet}}^i)
 \right\}\right] \\
 &=\left[\left\{
 D_uD_v(d_{V^{\bullet}}^{i+1})\circ D_w(d_{V^{\bullet}}^i)
 +D_v(d_{V^{\bullet}}^{i+1})\circ D_uD_w(d_{V^{\bullet}}^i)
 +D_vD_w(d_{V^{\bullet}}^{i+1})\circ D_u(d_{V^{\bullet}}^i)
 \right.\right. \\
 &\quad
 +D_w(d_{V^{\bullet}}^{i+1})\circ D_vD_u(d_{V^{\bullet}}^i)
 +D_wD_u(d_{V^{\bullet}}^{i+1})\circ D_v(d_{V^{\bullet}}^i)
 +D_u(d_{V^{\bullet}}^{i+1})\circ D_wD_v(d_{V^{\bullet}}^i) \\
 &\quad
 +D_w(d_{V^{\bullet}}^{i+1})\circ D_uD_v(d_{V^{\bullet}}^i)
 -D_w(d_{V^{\bullet}}^{i+1})\circ D_vD_u(d_{V^{\bullet}}^i)
 +D_uD_w(d_{V^{\bullet}}^{i+1})\circ D_v(d_{V^{\bullet}}^i) \\
 &\quad \left.\left.
 -D_wD_u(d_{V^{\bullet}}^{i+1})\circ D_v(d_{V^{\bullet}}^i)
 +D_u(d_{V^{\bullet}}^{i+1})\circ D_vD_w(d_{V^{\bullet}}^i)
 -D_u(d_{V^{\bullet}}^{i+1})\circ D_wD_v(d_{V^{\bullet}}^i)
 \right\}\right] \\
 &=\left[\left\{
 D_uD_v(d_{V^{\bullet}}^{i+1})\circ D_w(d_{V^{\bullet}}^i)
 +D_v(d_{V^{\bullet}}^{i+1})\circ D_uD_w(d_{V^{\bullet}}^i)
 +D_vD_w(d_{V^{\bullet}}^{i+1})\circ D_u(d_{V^{\bullet}}^i)
 \right.\right. \\
 & \quad \left.\left.
 +D_w(d_{V^{\bullet}}^{i+1})\circ D_uD_v(d_{V^{\bullet}}^i)
 +D_uD_w(d_{V^{\bullet}}^{i+1})\circ D_v(d_{V^{\bullet}}^i)
 +D_u(d_{V^{\bullet}}^{i+1})\circ D_vD_w(d_{V^{\bullet}}^i)
 \right\}\right] \\
 &=\left[
 \left\{ D_uD_vD_w(d_{V^{\bullet}}^{i+1}\circ d_{V^{\bullet}}^i)\right\}
 -\left\{ D_uD_vD_w(d_{V^{\bullet}}^{i+1})\circ d_{V^{\bullet}}^i
 +d_{V^{\bullet}}^{i+1}\circ D_uD_vD_w(d_{V^{\bullet}})
 \right\} \right] \\
 &=\left[ \{D_uD_vD_w(0)\}-d\left\{D_uD_vD_w(d_{V^{\bullet}}^i)\right\}\right] \\
 &=0.
\end{align*}
Here note that
\begin{align*}
 D_uD_vD_w(d_{V^{\bullet}}^{i+1}\circ d_{V^{\bullet}}^i)
 &=D_u(D_v(D_w(d_{V^{\bullet}}^{i+1}\circ d_{V^{\bullet}}^i)) )\\
 &=D_u(D_v(D_w(d_{V^{\bullet}}^{i+1})\circ d_{V^{\bullet}}^i
 +d_{V^{\bullet}}^{i+1}\circ D_w(d_{V^{\bullet}}^i))) \\
 &=D_u\left(D_vD_w(d_{V^{\bullet}}^{i+1})\circ d_{V^{\bullet}}^i
 +D_w(d_{V^{\bullet}}^{i+1})\circ D_v(d_{V^{\bullet}}^i) \right. \\
 &\quad \left.
 +D_v(d_{V^{\bullet}}^{i+1})\circ D_w(d_{V^{\bullet}}^i)
 +d_{V^{\bullet}}^{i+1}\circ D_vD_w(d_{V^{\bullet}}^i)\right) \\
 &= D_uD_vD_w(d_{V^{\bullet}}^{i+1})\circ d_{V^{\bullet}}^i
 +D_vD_w(d_{V^{\bullet}}^{i+1})\circ D_u(d_{V^{\bullet}}^i)
 +D_uD_w(d_{V^{\bullet}}^{i+1})\circ D_v(d_{V^{\bullet}}^i) \\
 & \quad +D_w(d_{V^{\bullet}}^{i+1})D_uD_v(d_{V^{\bullet}}^i)
 +D_uD_v(d_{V^{\bullet}}^{i+1})\circ D_w(d_{V^{\bullet}}^i)
 +D_v(d_{V^{\bullet}}^{i+1})\circ D_uD_w(d_{V^{\bullet}}^i) \\
 & \quad +D_u(d_{V^{\bullet}}^{i+1})\circ D_vD_w(d_{V^{\bullet}}^i)
 +d_{V^{\bullet}}^{i+1}\circ D_uD_vD_w(d_{V^{\bullet}}^i)
\end{align*}
So $\alpha$ is a closed $2$-form.
\end{proof}

\noindent
{\bf Acknowledgments.}
The author would like to thank Professors
Akira Ishii and K\={o}ta Yoshioka
for giving him the problem solved in this paper.
The author would also like to thank Professor
Fumiharu Kato for teaching him a fundamental
concepts of algebraic spaces.




%

\end{document}